\documentclass{amsart}[11pt]
\usepackage{amsmath,amsfonts,amssymb,latexsym,amsthm}
\usepackage{epsfig}
\usepackage{psfrag}
\usepackage{hyperref} 
\usepackage[dvipsnames]{xcolor}

\usepackage{hyperref}

\usepackage[all]{xy}






\voffset=2mm
\oddsidemargin=8pt \evensidemargin=8pt
\headheight=8pt     \topmargin=-26pt 
\textheight=653pt   \textwidth=460pt

\newtheorem{thm}{Theorem}[section]
\newtheorem{lemma}[thm]{Lemma}

\newtheorem{cor}[thm]{Corollary}

\newtheorem{question}[thm]{Question}

\theoremstyle{plain}

\theoremstyle{definition}
\newtheorem{Example}[thm]{Example}

\newtheorem{rem}[thm]{Remark}

\numberwithin{equation}{section}


\def\emp{\nothing}
\def\sq{\square}

\def\nn{\mathbb N}
\def\cc{\mathbb C}
\def\rr{\mathbb R}

\def\Ga{\Gamma}

\def\De{\Delta}

\def\la{\lambda}
\def\ga{\gamma}

\def\cC{\mathcal C}
\def\cB{\mathcal B}
\def\ca{\mathcal A}
\def\cA{\mathcal A}
\def\cb{\mathcal B}

\def\cR{\mathcal R}

\def\ssu{\subset}

\def\<{\langle}
\def\>{\rangle}

\def\rR{ {\text {\rm R} } }

\def\SYT{ {\text {\rm SYT} } }

\def\Ups{\Upsilon}

\def\rH{{\text {\sc H} } }

\def\0{{\mathbf 0}}

\def\nothing{\varnothing}

\def\.{\hskip.06cm}
\def\ts{\hskip.03cm}

\def\lra{\leftrightarrow}

\def\bx{\textbf{\textit{x}}}
\def\bc{\textbf{\textit{c}}}

\def\by{\textbf{\textit{y}}}

\def\bba{\textbf{\textit{a}}}
\def\bbb{\textbf{\textit{b}}}

\def\sign{{\rm sign}}







\def\nin{\noindent}

\renewcommand{\mod}[1]{
\;\, \textup{mod} \; #1
}

\def\lra{\leftrightarrow}

\def\wh{\widehat}

\title[Hidden symmetries]{Hidden symmetries of weighted lozenge tilings}
\date{}

\author[Igor Pak \and Fedor Petrov]{Igor~Pak$^{\star}$  \  and  \   Fedor Petrov$^{\dagger}$}

\thanks{\thinspace ${\hspace{-.45ex}}^\star$Department of Mathematics,
UCLA, Los Angeles, CA, 90095. \,  Email: \texttt{pak@math.ucla.edu}}

\thanks{\thinspace ${\hspace{-.45ex}}^\dagger$Steklov Mathematical Institute,
St.\ Petersburg, Russia. \,  Email: \texttt{fedyapetrov@gmail.com}}


\thanks{\thinspace \
\today}

\begin{document}
\maketitle

\begin{abstract}
We study the weighted partition function for
lozenge tilings, with weights given by multivariate rational functions
originally defined in~\cite{MPP3} in the context of the
factorial Schur functions.  We prove that this partition
function is symmetric for large families of regions.  We employ
both combinatorial and algebraic proofs.
\end{abstract}


\section{Introduction}
\emph{Hidden symmetries} are pervasive across the natural sciences,
but are always a delight whenever discovered.  In Combinatorics, they
are especially fascinating, as they point towards both advantages and
limitations of the tools, cf.~$\S$\ref{ss:finrem-other}.
Roughly speaking, the combinatorial approach strips away much
of the structure, be it algebraic, geometric, etc., while
allowing a direct investigation often resulting in an explicit
resolution of a problem.  But this process comes at a cost ---
when the underlying structure is lost, some symmetries become
invisible, or ``hidden''.

Occasionally this process runs in reverse.  When a hidden symmetry
is discovered for a well-known combinatorial structure, it is as
surprising as it is puzzling, since this points to a rich structure
which yet to be understood (sometimes uncovered many years later).
This is the situation of this paper.

\smallskip

We enumerate the (weighted) lozenge tilings of regions on a triangular
lattice.  These tiling problems appear in a number of interrelated areas:
from general tiling literature~\cite{Thu} to combinatorics of plane
partitions~\cite{Kra}, to statistical physics of the dimer model~\cite{Gor}.
First studied by MacMahon, Kasteleyn and Temperley--Fisher in other settings,
these lozenge tilings are now extremely well understood by tools of the
determinant calculus, algebraic combinatorics and integrable
probability (see Section~\ref{s:finrem}).  Yet our hidden symmetries
appear to be new (see, however, $\S$\ref{ss:finrem-BP}).

\smallskip

The results of this paper are somewhat technical, but the
backstory is quite interesting.  We start with a classical result
of MacMahon: the number \ts $P_{a b c}$ \ts of plane partitions which
fit into \ts $[a\times b \times c]$ \ts box is given by a product
formula:
\begin{equation} \label{eq:macmahon}
P_{a b c} \, = \, \prod_{i=1}^a\prod_{j=1}^{b}\prod_{k=1}^c \.
\frac{i+j+k-1}{i+j+k-2}\,,
\end{equation}
%
%
%
If you think of these \emph{boxed plane partitions} as 3-dimensional
objects and squint your eyes, you see that they are in natural
bijection with lozenge tilings of the \ts
$\<a\times b \times c \times a \times b \times c\>$ \ts hexagon~$\rH\<a,b,c\>$,
see Figure~\ref{f:MPP3-Hex-big}.

There are numerous extensions and generalizations of~\eqref{eq:macmahon},
and it is key to many recent probabilistic studies.  On a combinatorial side,
there is a classical $q$-analogue \ts $P_{a b c}(q)$ \ts by the ``volume''
of the tilings, which corresponds to the size of the
plane partition.  If one views~\eqref{eq:macmahon} as an evaluation
of the Schur function, this $q$-analogue is given by
$$P_{a b c}(q) \. = \. q^{-a(a+1)b/2} \ts \cdot \ts s_{(b^a)}\bigl(1,q,\ldots,q^{c-1}\bigr)\ts.
$$
\begin{figure}[hbt]
\begin{center}
\psfrag{F}{{$\Phi$}}
\psfrag{H}{{\hskip-.3cm {\small $\rH\<5,11,4\>$}}}
\psfrag{P}{{\small \, $[9\times 12]$}}
\epsfig{file=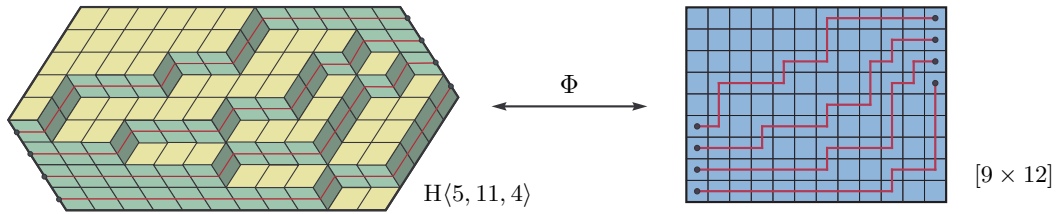,width=13.2cm}
\end{center}
\caption{A lozenge tiling of \ts $\rH\<5,11,4\>$ \ts and the
corresponding collection of non-intersecting paths in $[9\times 12]$.}
\label{f:MPP3-Hex-big}
\end{figure}

%
When the bottom rectangle $(b^a)$ is replaced by a Young diagram~$\la$,
there is \emph{Stanley's hook-content formula} for $s_\la(1,q,\ldots,q^{c-1})$.
There are many other exact product formulas for various further extensions,
some related to other root systems and symmetry classes recently surveyed
in~\cite{Kra}, some with surprising coincidences and hidden symmetries~\cite{Ste}.

On a probabilistic side, there is a celebrated \emph{Arctic circle} phenomenon
first discovered in~\cite{CLP} for $\rH\<n,n,n\>$, and then extended to general
regions in~\cite{CKP}. This work led to an incredible wealth of results
on the \emph{limit shapes} and \emph{random surfaces}, most of which goes
outside the scope of this paper, see an extensive survey~\cite{Gor}.
Let us single out~\cite{BGR} which gives a $5$-parameter elliptic deformations
(with one relation) of $P_{a b c}(q)$, and computed the exact asymptotic
formulas for the limit shape.

\smallskip

Our approach to a multivariate deformation of \ts $P_{a b c}$ \ts
is based on the recent work~\cite{MPP3} in Algebraic Combinatorics,
in turn inspired by the extensive study of the (equivariant)
cohomology of the Grassmannian.  To set this up, recall that
the lozenge tilings of \ts $\rH\<a,b,c\>$ \ts
are in bijection with collections of non-intersecting paths
in the rectangle, see Figure~\ref{f:MPP3-Hex-big}.  These
lattice paths are in bijection with the \emph{excited
diagrams}, thus giving a connection to the \emph{Naruse hook-length
formula}~\cite{MPP1,MPP2} the number of standard Young tableaux
of skew shapes.

In~\cite{MPP3}, the authors introduce a multivariate
deformation \ts $F_{a b c}(x_1,x_2,\ldots \, | \, y_1,y_2, \ldots )$
\ts of \ts $P_{a b c}$ \ts with two sets of variables
which play a superficially similar role:
$$F_{a b c} (1,1,\ldots  \, | \, 0, 0, \ldots) \. = \.
F_{a b c} (0,0,\ldots  \, | \, 1, 1, \ldots) \. = \. P_{a\ts (b-1)\ts c}
\..
$$
The key technical result in~\cite{MPP3} is the symmetry of \ts $F_{a b c}$.

Formally, the Morales--Pak--Panova (MPP--) Theorem~\ref{t:MPP3-identity},
shows that \ts $F_{a b c}(\bx\ts | \ts \by )$ \ts
is symmetric in the first set of variables~$\bx=(x_1,x_2,\ldots)$,
with the second set \ts
$\by=( y_1,y_2, \ldots )$ \ts as parameters, and vice versa (see~$\S$\ref{ss:finrem-MPP3}).
This result is derived from the algebraic properties of the
\emph{factorial symmetric functions}
defined by Macdonald in one of his ``variations"~\cite{Mac}.
These symmetric functions were later studied by Molev--Sagan~\cite{MS},
Ikeda--Naruse~\cite{IN}, and others, in connection
with the \emph{equivariant Schubert calculus}.  The authors
use a special case of this hidden symmetry to give product formulas
for the number of standard Young tableaux \ts $\SYT(\la/\mu)$,
for a $6$-parameter family \ts $\{\la/\mu\}$ \ts of skew Young diagrams.

\medskip

\nin
We obtain two generalizations and refinements of the MPP--theorem, to:
\smallskip

\nin
\hskip.5cm $(1)$ \ts \emph{trapezoid} (sawtooth) \emph{regions} obtained from \ts $\rH(a,b,c)$
by horizontal cuts,


\nin
\hskip.5cm  $(2)$ \ts \emph{parallelogram regions} obtained from \ts $\rH(a,b,c)$
by two vertical cuts.

\smallskip

\nin
Formally, for general regions~$\Ga$, we define a multivariate partition
function \ts $F(\bx \ts | \ts \by)$ \ts by summing over all lozenge tilings   
of~$\Ga$.  In case~$(1)$, we show that \ts $F(\bx \ts | \ts \by)$ \ts
is symmetric in~$\bx$, and in case~$(2)$ we show that
\ts $F(\bx \ts | \ts \by)$ \ts is symmetric in~$\by$.  Both
results generalize (two parts of) the MPP--theorem, which until now had
only a technical proof based on the properties of factorial Schur
functions.  We then obtain a common generalization  \ts
Main Theorem~\ref{t:main-gen}.
We leave open the problem of finding probabilistic
and enumerative applications of these general hidden symmetries.

\smallskip

The rest of the paper is structured as follows.  We start by
stating both the background and the results in Section~\ref{s:main},
followed by their lozenge tilings interpretation and quick pointers
to the literature.  In the following two sections (Section~\ref{s:comb}
and~\ref{s:alg}), we give
completely independent combinatorial and algebraic proofs of
the results, including the proof of Main Theorem~\ref{t:main-gen}.
We conclude with final remarks and open problems
in Section~\ref{s:finrem}.

\bigskip

\section{Main results}\label{s:main}

\subsection{Known results} \label{ss:main-known}

We start with the MPP--theorem mentioned in the introduction:

\smallskip

\begin{thm}[{Morales--Pak--Panova~\cite[Thm~3.10]{MPP3}}]\label{t:MPP3-identity}
Define the following multivariate rational function:
\begin{equation}
\label{eq:Naruse-1-path}
F_{a b c}\bigl(x_1,\ldots,x_{a+c} \, | \, y_1,\ldots,y_{b+c}\bigr) \. := \,
\sum_{\substack{\Ups=(\ga_1,\ldots,\ga_c) \\ \ga_k\ts{}:\ts{}(a+k,1)\to
  (k,b+c)}} \prod_{k=1}^c \. \prod_{(i,j) \in \ga_k} \. \frac{1}{x_i+y_j}\.,
 \end{equation}
where the sum is over all collections~$\Ups$ of non-intersecting lattice paths in
the \ts $[(a+c)\times (b+c)]$ \ts rectangle $($see Figure~\ref{f:MPP3-big}$)$.
Then \ts $F_{a b c}(\bx \. | \ts \by \ts)$ \ts
is symmetric in \ts $\bx=(x_1,\ldots,x_{a+c})$ \ts and in \ts $\by=(y_1,\ldots,y_{b+c})$.
\end{thm}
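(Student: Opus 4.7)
I would handle the two symmetries separately, since they are of quite different nature: the symmetry in $\bx$ becomes essentially visible from a determinantal rewriting, while the symmetry in $\by$ is the genuinely hidden one.

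First, apply the Lindstr\"om--Gessel--Viennot lemma. Since the source tuple $(a+1,1),\ldots,(a+c,1)$ and the sink tuple $(1,b+c),\ldots,(c,b+c)$ are arranged in compatible order along the boundary of the rectangle, we get
\[
F_{abc}(\bx\ts|\ts\by) \. = \. \det\bigl[\ts H_{k,l}(\bx\ts|\ts\by)\ts\bigr]_{k,l=1}^{c},
\]
where $H_{k,l}(\bx\ts|\ts\by) := \sum_{\gamma:(a+k,1)\to(l,b+c)}\prod_{(i,j)\in\gamma}(x_i+y_j)^{-1}$ is the weighted sum over single (possibly intersecting) lattice paths. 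Next, a telescoping partial-fraction computation evaluates each $H_{k,l}$ in closed form --- iteratively exploiting the identity $(x_i+y_j)^{-1}-(x_{i+1}+y_j)^{-1}=(x_{i+1}-x_i)(x_i+y_j)^{-1}(x_{i+1}+y_j)^{-1}$ along consecutive rows. After extracting the common factor $\prod_{i,j}(x_i+y_j)^{-1}$, the remaining determinant becomes an alternant in $\bx$ divided by the Vandermonde $\prod_{i<j}(x_i-x_j)$; in fact one recognizes it as (a specialization of) the factorial Schur polynomial $s_{(b^a)}(\bx\ts|-\by')$, for an appropriate shift $\by'$ of the $\by$-parameters. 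The $\bx$-symmetry is then immediate from this bialternant form.

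For the $\by$-symmetry, the factorial Schur viewpoint alone does not suffice, since factorial Schur polynomials are \emph{not} symmetric in their parameter sequence in general. Instead, I would appeal to a Molev--Sagan / Okounkov interpolation characterization applied to the rectangular shape $(b^a)$: the function $F_{abc}$ is determined, up to its symmetric denominator, by a family of vanishing conditions on substitutions of the form $\bx\mapsto\by_\mu$ indexed by partitions $\mu\not\supseteq(b^a)$, together with a normalization at $\mu=(b^a)$. Since this family of conditions is manifestly invariant under any permutation of $\by$, the $\by$-symmetry of $F_{abc}$ follows. A more combinatorial variant of Step~2 would construct a weight-preserving involution on non-intersecting path families implementing each adjacent transposition $y_j\leftrightarrow y_{j+1}$ via a local Yang--Baxter-type move on the paths; this is an alternative route to the same conclusion but is harder to set up cleanly.

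The principal obstacle is precisely the $\by$-symmetry. The algebraic identification of the LGV determinant with a factorial Schur polynomial is the delicate combinatorial bookkeeping that carries the argument, and without it the symmetry in $\by$ remains invisible at the level of the path sum. The telescoping computation of $H_{k,l}$ is mechanical but must be performed with care to pin down the correct shift $\by'$; once this identification is in place, both symmetries follow from well-developed machinery on factorial and interpolation Schur functions.
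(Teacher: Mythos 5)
You have the two symmetries the wrong way around, and that inversion is where the proposal breaks. In the LGV determinant $\det[H_{k,l}]$, each entry is a sum over single paths from $(a+k,1)$ to $(l,b+c)$; every such path traverses \emph{all} columns $1,\ldots,b+c$, and the single-path generating function is already symmetric in the full alphabet $(y_1,\ldots,y_{b+c})$ --- this is exactly Lemma~\ref{l:path} of the paper (proved by a coefficient-extraction recurrence), and it also reduces to the two-row telescoping identity of Lemma~\ref{l:2sym} applied to adjacent columns. So the $\by$-symmetry is immediate: a determinant of entrywise $\by$-symmetric functions is $\by$-symmetric. Your proposed route for $\by$ via a Molev--Sagan/Okounkov vanishing characterization is not only unnecessary but unsound as stated: the interpolation conditions are indexed by evaluation points of the form $\by_\mu$ that depend on the \emph{ordering} of the parameter sequence, so they are not ``manifestly invariant under any permutation of $\by$''; indeed factorial Schur functions are genuinely not symmetric in their parameters, so no argument that merely transports symmetry through such an identification can close this step.

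Conversely, the $\bx$-symmetry is the hard one (the paper says so explicitly): a path from $(a+k,1)$ to $(l,b+c)$ visits only rows $l,\ldots,a+k$, so each $H_{k,l}$ is symmetric only in a sub-alphabet of $\bx$, and nothing about the determinant makes the full $\bx$-symmetry ``essentially visible.'' Your plan here --- closed forms for the $H_{k,l}$, then recognition of the determinant as an alternant over a Vandermonde, i.e.\ a factorial Schur function --- is essentially the original technical proof in~\cite{MPP3}; it can be made to work, but the identification is precisely the delicate step and is only asserted. The paper instead proves the $\bx$-symmetry by reducing to adjacent transpositions $(x_i,x_{i+1})$: restricting a path family to two consecutive rows decomposes it into independent two-row rectangles governed by the telescoping identity $(\diamond)$ of Lemma~\ref{l:2sym} (combinatorial route), or by the manifestly $\bx$-symmetric coefficient extraction $\left[t_1^{b_1-1}\cdots t_k^{b_k-1}\right]\prod_{i<j}(t_j-t_i)\prod_{j,s}(x_s-t_j)^{-1}$ (algebraic route), and then glues two horizontal cuts at row $a+1$ to recover Theorem~\ref{t:MPP3-identity}. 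To repair your proposal with least effort: keep LGV, prove the entrywise $\by$-symmetry directly for the $\by$ part, and supply the full alternant computation (or the two-cut reduction) for the $\bx$ part.
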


Strictly speaking, Theorem \ref{t:MPP3-identity} follows from the proof
of Thm~3.10 in \cite{MPP3}, but not from the statement. 

\begin{figure}[hbt]
\begin{center}
\psfrag{x}{$x$}
\psfrag{y}{$y$}
\psfrag{x1}{$x_1$}
\psfrag{x2}{$x_2$}
\psfrag{y1}{$y_1$}
\psfrag{y2}{$y_2$}
\psfrag{Y1}{$\Ups_0$}
\psfrag{Y2}{$\Ups_1$}
\psfrag{Y3}{$\Ups_2$}
\psfrag{A1}{\small $A_1$}
\psfrag{A2}{\small $A_2$}
\psfrag{A3}{\small $A_3$}
\psfrag{A4}{\small $A_4$}
\psfrag{B1}{\small $B_1$}
\psfrag{B2}{\small $B_2$}
\psfrag{B3}{\small $B_3$}
\psfrag{B4}{\small $B_4$}
\epsfig{file=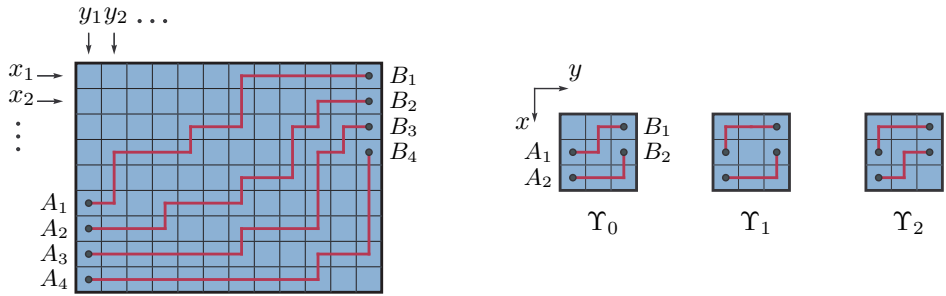,width=12.8cm}
\end{center}
\caption{\underline{Left}: An example of a collection $\Ups$ of $c$
paths as in Theorem~\ref{t:MPP3-identity},
where \ts $a=5$, $b=8$, and $c=4$. \underline{Right}:
An example of all three possible paths $\Ups_0, \Ups_1,\Ups_2$
for $a=1$, $b=1$, and $c=2$.}
\label{f:MPP3-big}
\end{figure}


Here and everywhere below we adopt the coordinate system that is standard
for matrices: the first coordinate~$x$ is increasing downwards and the
second coordinate~$y$ is increasing from left to right
(see Figure~\ref{f:MPP3-big}).

\begin{Example}{\rm Let $a=1$, $b=1$ and $c=2$.  We have $A_1=(2,1)$, $A_2=(3,1)$,
$B_1=(1,3)$ and $B_2=(2,3)$, and \ts $\Ups = (\ga_1,\ga_2)$ \ts
are non-intersecting paths \ts $\ga_1: A_1 \to B_1$ \ts and \ts $\ga_2: A_2\to B_2$
inside a $3\times 3$ square.
There are three such $\Ups$ avoiding either $(1,1)$, or $(2,2)$,
or~$(3,3)$, see Figure~\ref{f:MPP3-big} (Right).   For example, $\Ups_0=(\ga_1,\ga_2)$,
where \ts $\ga_1: A_1=(2,1)\to (2,2) \to (1,2) \to (1,3)=B_1$, and
\ts $\ga_2: A_2=(3,1)\to (3,2) \to (3,3) \to (2,3)=B_2$, i.e., $\Ups_0$ is avoiding~$(1,1)$.
We have:
$$
F_{1\ts 3\ts 2}(\bx \ts | \ts \by) \. = \. w(\Ups_0) \ts + \ts w(\Ups_1)\ts + \ts w(\Ups_2)
\. = \. \Bigl[(x_1+y_1) \. + \. (x_2+y_2)\. + \. (x_3+y_3)\Bigr]
\. \prod_{i=1}^3 \. \prod_{j=1}^3 \.\frac{1}{x_i+y_j}\.,
$$
which is symmetric in~$\bx$ and in~$\by$ (but not in \emph{both}~$\bx$ and~$\by$).
}\end{Example}

Let us emphasize that although the symmetry in both sets of variables
may seem to play the same role, the result is not symmetric under
the transposition giving \ts $\bx \lra \by$.
In fact, these are fundamentally different symmetries: the one in~$\bx$ is both
difficult and interesting, while the one in~$\by$ is relatively straightforward.
As we mentioned in the introduction, the two generalizations we present each
retain only one of these symmetries.

\subsection{New results} \label{ss:main-lattice}
There is a natural way to generalize the setting of
Theorem~\ref{t:MPP3-identity}. Let \ts
$[m\times n]=\{(p,q)\in \nn^2, 1\le p \le m, 1\le  q  \le n\}$,
$\cA=(A_1,\ldots,A_k)$, $\cB=(B_1,\ldots,B_k)$ be two
$k$-tuples of points in $[m\times n]$.
Denote by \ts $\Ups:\cA\to \cB$ a collection \ts $(\ga_1,\ldots,\ga_k)$
\ts of non-intersecting lattice paths $\ga_i: A_i \to B_i$,
and let \ts $N(\cA,\cB):=\#\{\Ups: \cA\to \cB\}$ \ts be the number
of such collections. Throughout the paper, unless stated otherwise,
all paths will use only {\tt Up} and {\tt Right} steps, where the
coordinates are arranged as in Figure~\ref{f:MPP3-big}
(see also~$\S$\ref{ss:finrem-UR}).

Note that for fixed $\cA,\cB\ssu \nn^2$, the set \ts $\{\Ups: \cA\to \cB\}$
\ts is a classical combinatorial object which generalizes
Dyck paths, plane partitions, Young tableaux, etc.~\cite[Ch.~5]{GJ}.
Under mild conditions, the number $N(\cA,\cB)$ has a determinant formula
via the \emph{Lindstr\"om--Gessel--Viennot} (LGV--)  \emph{lemma}
(see $\S$\ref{ss:alg-LGV}).
As we discussed in the introduction, for $\ca,\cb$ as in
Theorem~\ref{t:MPP3-identity}, the number \ts $N(\ca,\cb)$ \ts of
non-intersecting collections of paths \ts $\Ups: \cA\to \cB$
is equal to \ts $P_{a b c}$ given by~\eqref{eq:macmahon}.

Define the \emph{weight} of~$\Ups$ as
$$
w(\Ups) \. := \. \prod_{i=1}^k \. w(\ga_i)\., \quad \text{where} \quad
w(\ga) \. : = \.  \prod_{(i,j)\in \ga} \. \frac{1}{x_i+y_j}\,.
$$
Let
\begin{equation}\label{eq:F-def}
F_{\cA,\cB}\bigl(x_1,\ldots,x_m\, | \, y_1,\ldots,y_n\bigr)\, := \,
\sum_{\Ups: \ts\cA\to\cB} \. w(\Ups)\ts.
\end{equation}
Note that $F$ is not symmetric for general $\cA,\cB$.  For example,
let \ts $k=2$, \ts $A_1=B_1=(1,1)$, \ts $A_2=B_2=(2,2)$. Then \ts
$N(\cA,\cB)=1$, and
$$
F_{\cA,\cB}(x_1,x_2\, | \, y_1,y_2) \, = \, \frac{1}{(x_1+y_1)(x_2+y_2)}\,,
$$
which is not symmetric in either set of variables. Since there is
no apparent action of either symmetric group on the paths
collections~$\Ups$ in Theorem~\ref{t:MPP3-identity}, the theorem
represents a \emph{hidden symmetry}, and raises the following general
question:

\begin{question} Are there other sets \ts $\cA,\ts \cB\ssu [m\times n]$,
for which the multivariate generating function \ts $F_{\cA, \cB}(\bx \. | \ts \by)$ \ts
is symmetric in \ts $(x_1,\ldots,x_m)$?
\end{question}

We give two positive answers to this question, refining both symmetries in
Theorem~\ref{t:MPP3-identity}~:

\begin{thm}[Horizontal cut] \label{t:hor-cut}
Let \ts $m=a+k$, \ts $A_1=(a+1,1)$, \ldots, \ts $A_k=(m,1)$,
and $\cA=(A_1,\ldots,A_k)$. Similarly, let \ts $B_1=(1,b_1)$,
 \ldots, \ts $B_k=(1,b_k)$, for some \ts
 $1\le b_1 < b_2 < \ldots < b_k \le n$, and \ts $\cB=(B_1,\ldots,B_k)$.
 Then the multivariate function
$$
F_{\ca,\cb}\bigl(x_1,\ldots,x_m\, | \, y_1,\ldots,y_n\bigr)
$$
defined in~\eqref{eq:F-def}, is symmetric in \ts $\bx=(x_1,\ldots,x_{m})$.
\end{thm}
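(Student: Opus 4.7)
\smallskip
\noindent\textbf{Proof plan.} The plan is to reduce the asserted symmetry to invariance of $F_{\cA,\cB}$ under each adjacent transposition $s_r:x_r\leftrightarrow x_{r+1}$, apply the Lindstr\"om--Gessel--Viennot (LGV) lemma to rewrite $F_{\cA,\cB}$ as a determinant of single-path generating functions, and verify invariance via a single algebraic identity combined with an elementary row operation.

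First, for each $(p,q)$ introduce the single-path weighted sum
\[
G_{p,q}\bigl(x_1,\ldots,x_p\,|\,y_1,\ldots,y_q\bigr) \. := \. \sum_{\gamma:\ts(p,1)\to (1,q)} \. \prod_{(i,j)\in \gamma}\.\frac{1}{x_i+y_j}\,.
\]
In the setting of Theorem~\ref{t:hor-cut}, $A_l=(a+l,1)$ lies strictly above $A_{l'}$ while $B_l=(1,b_l)$ lies strictly to the left of $B_{l'}$ whenever $l<l'$; so any pair of paths $A_l\to B_{l'}$ and $A_{l'}\to B_l$ must cross, and LGV yields
\[
F_{\cA,\cB}(\bx\,|\,\by) \. = \. \det\bigl[G_{a+l,\,b_j}\bigl(x_1,\ldots,x_{a+l}\,|\,y_1,\ldots,y_{b_j}\bigr)\bigr]_{l,j=1}^{k}.
\]

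The heart of the argument is the identity
\[
G_{p,q}\bigl(x_1,\ldots,x_{p-1},x_{p+1}\,|\,\by\bigr)\. - \. G_{p,q}\bigl(x_1,\ldots,x_p\,|\,\by\bigr)\. = \. (x_p-x_{p+1})\. G_{p+1,q}\bigl(x_1,\ldots,x_{p+1}\,|\,\by\bigr), \qquad(\star)
\]
which I would prove by decomposing each generating function according to the column at which its path leaves the relevant row: paths in $G_{p+1,q}$ by the column $c$ where they jump from row $p+1$ up to row $p$, and paths in the two copies of $G_{p,q}$ by the column where they leave row $p$. The substitution $x_p-x_{p+1}=(x_p+y_c)-(x_{p+1}+y_c)$ then produces a telescoping in $c$ that collapses the double sum on the right to the required difference on the left. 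An induction on $p$ based on $(\star)$ (trivial base $p=1$; in the step, $(\star)$ read as a divided difference shows invariance under $s_p$, while the remaining adjacent transpositions act within the argument tuple of a symmetric $G_p$) also yields the symmetry of each $G_{p,q}$ in $(x_1,\ldots,x_p)$.

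With $(\star)$ in hand, invariance of the determinant under $s_r$ splits into two cases. If $r\le a$, then every entry $G_{a+l,b_j}$ contains both $x_r$ and $x_{r+1}$ among its arguments and is symmetric in them, so the determinant is unchanged. If $r=a+l_0$ for some $1\le l_0\le k-1$, then rows with $l<l_0$ are independent of $(x_r,x_{r+1})$, rows with $l>l_0$ are symmetric in them, and $(\star)$ shows that the swap replaces row $l_0$ by $R_{l_0}+(x_r-x_{r+1})R_{l_0+1}$; multilinearity of $\det$ gives invariance because the added term yields a matrix with two equal rows. The principal obstacle will be the bookkeeping in the proof of $(\star)$ --- in particular, making sure each lattice cell contributes its weight exactly once when a path is split at the boundary between rows $p$ and $p+1$.
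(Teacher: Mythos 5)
Your proposal is correct, and it threads a path between the paper's two proofs of this theorem without coinciding with either. The paper's combinatorial proof never invokes LGV: it reduces to the transposition $x_i\leftrightarrow x_{i+1}$, restricts a fixed family $\Upsilon$ to rows $i$ and $i+1$, splits that restriction into disjoint two-row rectangles each traversed corner to corner, and applies the $2$-symmetry Lemma~\ref{l:2sym} to each rectangle; this local argument is what later extends to Main Theorem~\ref{t:main-gen}. The paper's algebraic proof shares your first step (vertex--LGV with the identity permutation forced), but then writes each entry as a coefficient $\bigl[t_j^{b_j-1}\bigr]\ts\varphi_i(t_j)\prod_s (x_s-t_j)^{-1}$ via Lemma~\ref{l:path} and evaluates the determinant by a Vandermonde identity, ending with a manifestly symmetric closed form for $F_{\ca,\cb}$. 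You instead keep the determinant and kill each adjacent transposition by an elementary row operation, powered by the divided-difference identity $(\star)$ --- which is precisely the general-$p$ extension of the paper's identity $(\diamond)$, provable by the same telescoping; your case analysis on $r\le a$ versus $r=a+l_0$ and the two-equal-rows cancellation are sound. What you give up relative to the paper is the closed formula (from the algebraic proof) and the reach of Theorem~\ref{t:main-gen} (from the combinatorial proof, since your argument needs all sources in one column and all sinks in one row both for LGV and for the row operations to line up). One point to tighten: in your induction for the symmetry of $G_{p,q}$, the transposition $s_{p-1}$ does \emph{not} act within the argument tuple of the divided-difference formula, since $x_p$ appears in its denominator and in only one of the two $G_{p-1,q}$ terms; for $s_r$ with $r<p-1$ you should decompose $G_{p,q}$ by the column at which the path exits row $p$, so the inner factors are translated copies of $G_{p-1,\cdot}$ to which the induction hypothesis applies, while $s_{p-1}$ is handled by $(\star)$ one level down. (Alternatively, the symmetry of $G_{p,q}$ is the $k=1$ case of the theorem and is exactly the paper's Lemma~\ref{l:path}.)
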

\begin{figure}[hbt]
\begin{center}
\psfrag{A1}{\small $A_1$}
\psfrag{A2}{\small $A_2$}
\psfrag{A3}{\small $A_3$}
\psfrag{A4}{\small $A_4$}
\psfrag{B1}{\small $B_1$}
\psfrag{B2}{\small $B_2$}
\psfrag{B3}{\small $B_3$}
\psfrag{B4}{\small $B_4$}
\epsfig{file=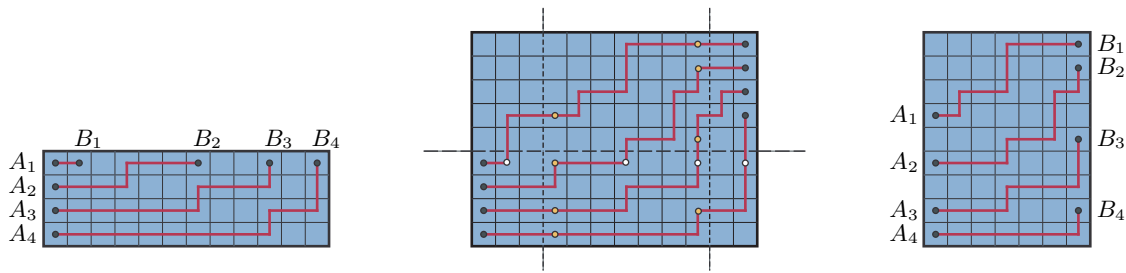,width=15.2cm}
\end{center}
\caption{Examples of paths collections in
Theorems~\ref{t:hor-cut} and~\ref{t:vert-cut} and how they refine
Theorem~\ref{t:MPP3-identity}. }
\label{f:two-cuts}
\end{figure}

\smallskip

See Figure~\ref{f:two-cuts} for the explanation of the
\emph{horizontal cut} in the title.  Let us show that
Theorem~\ref{t:hor-cut} implies the $\bx$-symmetry part of
Theorem~\ref{t:MPP3-identity}, for $a\ge c$. 
Apply Theorem~\ref{t:hor-cut} to two adjacent cuts: above and below
row~$(a+1)$, including the latter into both parts.
Of course, to apply Theorem~\ref{t:hor-cut} to the top part,
rotate it 180~degrees.  We obtain that $F_{abc}$ is symmetric
in both \ts $(x_1,\ldots,x_{a+1})$ \ts and in \ts \ts
$(x_{a+1},\ldots,x_{a+c})$, implying the symmetry in
\ts $(x_1,\ldots,x_{a+c})$, for every fixed start/end points $\cC$
in row $(a+1)$.  Summing over all such~$\cC$, we obtain
Theorem~\ref{t:MPP3-identity}.

\smallskip

\begin{thm}[Vertical double cut] \label{t:vert-cut}
Let \ts $A_1=(a_1,1)$, \ldots, \ts $A_k=(a_k,1)$, for some \ts
 $1\le a_1 < a_2 < \ldots < a_k \le k+\ell$, and \ts $\cA=(A_1,\ldots,A_k)$.
 Similarly, let \ts $m\ge 1$, \ts
  $B_1=(b_1,m)$, \ldots, \ts $B_k=(b_k,m)$, for some \ts
 $1\le b_1 < b_2 < \ldots < b_k \le k+\ell$, and \ts $\cB=(B_1,\ldots,B_k)$.
Then the multivariate function
$$
F_{\ca,\cb}\bigl(x_1,\ldots,x_{k+\ell}\, | \, y_1,\ldots,y_m\bigr),
$$
defined in~\eqref{eq:F-def}, is symmetric in \ts $\by=(y_1,\ldots,y_{m})$.
\end{thm}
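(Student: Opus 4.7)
The plan is to deduce Theorem~\ref{t:vert-cut} from the Lindstr\"om--Gessel--Viennot (LGV) lemma, thereby reducing the multi-path claim to the $\by$-symmetry of a single-path weighted sum, which is then settled by an elementary telescoping identity. Since the $A_p=(a_p,1)$ and $B_q=(b_q,m)$ lie on the two vertical sides of the rectangle with strictly increasing first coordinates, and all paths take only {\tt Up} and {\tt Right} steps, any permutation $\sigma\neq\id$ forces $\gamma_p:A_p\to B_{\sigma(p)}$ and $\gamma_{p'}:A_{p'}\to B_{\sigma(p')}$ to intersect for some $p<p'$ (path $p$ starts above path $p'$ but ends below it). Hence LGV applies and yields
\[
F_{\cA,\cB}(\bx\,|\,\by) \,=\, \det\bigl[\ts f(A_p,B_q)\ts\bigr]_{p,q=1}^{k}\,, \qquad f(A,B) \,:=\, \sum_{\gamma\ts:\ts A\to B} w(\gamma)\,.
\]
Since the determinant of a matrix whose entries are all symmetric in $\by$ is itself symmetric in $\by$, it suffices to prove the $\by$-symmetry of the scalar sum $f((a,1),(b,m))$ for every fixed pair $(a,b)$ with $b\le a$ (the entry vanishes when $b>a$).

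To establish this single-path symmetry, I will check invariance under each adjacent swap $y_j\leftrightarrow y_{j+1}$. Encoding a path by its column-exit rows $a=r_0\ge r_1\ge\cdots\ge r_m=b$, where $r_j$ is the row at which the path leaves column~$j$ rightward, the weight factors as $\prod_{j=1}^{m}\prod_{i=r_j}^{r_{j-1}}(x_i+y_j)^{-1}$. Freezing all $r_\ell$ with $\ell\ne j$ and abbreviating $C:=r_{j-1}$, $D:=r_{j+1}$, $u:=y_j$, $v:=y_{j+1}$, the sum over $r_j\in[D,C]$ of the two-column contribution reduces the claim to
\[
\sum_{r=D}^{C}\,\prod_{i=r}^{C}\frac{1}{x_i+u}\,\prod_{i=D}^{r}\frac{1}{x_i+v} \,\,=\,\, \frac{1}{\prod_{i=D}^{C}(x_i+u)(x_i+v)}\,\cdot\,\frac{\prod_{i=D}^{C}(x_i+u) \,-\, \prod_{i=D}^{C}(x_i+v)}{u-v}\,,
\]
whose right-hand side is manifestly symmetric in $(u,v)$.

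After clearing the common denominator, this reduces to the polynomial telescoping identity
\[
\sum_{r=D}^{C}\,\prod_{i=D}^{r-1}(x_i+u)\,\prod_{i=r+1}^{C}(x_i+v) \,\,=\,\, \frac{\prod_{i=D}^{C}(x_i+u) \,-\, \prod_{i=D}^{C}(x_i+v)}{u-v}\,,
\]
which is the divided-difference expansion of $t\mapsto \prod_{i=D}^{C}(x_i+t)$ and follows by a one-line induction on $C-D$. The only piece of genuine computation is this identity, and it is entirely routine; the real content of the argument is the LGV reduction itself, which decouples the non-intersection constraints and collapses the multi-path problem to the symmetry of a single scalar rational function. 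This decoupling is exactly why the $\by$-direction is ``relatively straightforward'', in contrast to the $\bx$-symmetry of Theorem~\ref{t:hor-cut}, where no analogous determinantal reduction is available and the symmetry must be confronted at the level of the full non-intersecting configuration.
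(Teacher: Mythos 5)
Your proof is correct. Its first half coincides with the paper's algebraic proof of this theorem: there, too, the vertex version of the LGV--lemma (Corollary~\ref{c:LGV-v}) is used to write \ts $F_{\ca,\cb}$ \ts as \ts $\det(u_{pq})$ \ts with \ts $u_{pq}$ \ts the single-path sums (only the identity permutation survives, by the crossing argument you give), and the $\by$-symmetry of the determinant is deduced entry by entry. Where you diverge is in how the single-path symmetry is established. The paper invokes its Lemma~\ref{l:path}, which represents the single-path sum as the coefficient \ts $[t^{k-1}]\ts Q_{s,k}(t)$ \ts of a polynomial obtained by inverting \ts $\prod_j (x_j-t)$ \ts modulo \ts $P_k(t)=\prod_j(t+y_j)$, a form that is manifestly symmetric in the $y$'s (and simultaneously in the $x$'s). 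You instead verify invariance under each adjacent swap \ts $y_j\lra y_{j+1}$ \ts by freezing the other column-exit rows and summing over \ts $r_j$, which reduces to the divided-difference identity for \ts $t\mapsto\prod_i(x_i+t)$; that identity is exactly the identity \ts $(\diamond)$ \ts from the paper's $2$-symmetry Lemma~\ref{l:2sym} with the roles of \ts $\bx$ \ts and \ts $\by$ \ts interchanged. So your argument is a hybrid of the paper's two proofs: the determinantal reduction of the algebraic one combined with the adjacent-swap telescoping of the combinatorial one, and it is more elementary and self-contained than either taken alone. What the paper's Lemma~\ref{l:path} buys in exchange is the simultaneous symmetry in both variable sets, which its (harder) algebraic proof of Theorem~\ref{t:hor-cut} then exploits. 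On that note, one side remark of yours is off: a determinantal LGV reduction \emph{is} available for Theorem~\ref{t:hor-cut} and is carried out in the paper; the obstacle there is not the reduction but that the individual entries are not symmetric in all of \ts $\bx$, so a further Vandermonde-type manipulation under a multivariate coefficient extraction is required. This does not affect the validity of your proof of Theorem~\ref{t:vert-cut}.
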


\smallskip

In the theorem, one can assume that $a_i \ge b_i$ for all $i=1,\ldots, k$,
since otherwise there are no collections of \ts {\tt Up-Right}~paths~$\Ups$,
and the claim is vacuously true (cf.~$\S$\ref{ss:finrem-UR}).
We should mention that this generalization of the $\by$-symmetry
part of Theorem~\ref{t:MPP3-identity} is conceptually more straightforward,
as it both contains it as a special case and refines it, see Figure~\ref{f:two-cuts}.

\begin{rem} Darij Grinberg (private communication)
 suggested the following way to deduce Theorem \ref{t:hor-cut}
from Theorem \ref{t:vert-cut}. Denote $C_i=(m,i)$ for $i=1,\ldots,k$, $\cC=(C_1,\ldots,C_k)$.
There is a natural weight-preserving
bijection between collections of paths $\Ups_a: \ts\cA\to\cB$ and $\Ups_c: \ts\cC\to\cB$:
replace the horizontal initial segments $[A_i,(a+i,i)]$ in $\Ups_a$
to the vertical initial segments $[C_i,(a+i,i)]$ in $\Ups_c$.
\end{rem}

\medskip

\subsection{Lozenge tilings formulation} \label{ss:main-lozenge}
Let us recall the bijection~$\Phi$ in Figure~\ref{f:MPP3-Hex-big}
which allows us to translate the lattice paths results into
statements about lozenge tilings.  Start with \ts
$\Ups =(\ga_1,\ldots,\ga_c)$ in the rectangle \ts $S:=[(a+c)\times (b+c)]$.
Place points in the middle of edges of the opposite $c$ edges
in \ts $\rH=\rH\<a,b-1,c\>$ \ts as in the Figure~\ref{f:MPP3-Hex-big}.  Think of paths
$\ga_i$ in $S$ in  as a union of edges.  Start with vertices in the lower
left edge of~$\rH$ as in the Figure.  For every  {\tt Right}
edge in $\ga_i$, make a {\tt Right} edge through a light green lozenge
in~$\rH$.  Similarly, for every {\tt Up} edge in $\ga_i$ make a
{\tt Up-Right} (diagonal) edge through a dark green lozenge in~$\rH$.
When all of $\Ups$ is mapped onto~$\rH$, we obtain a partial tiling
of the hexagon with light and dark green lozenges.  Fill the
remaining space with yellow lozenges. This completes the
construction of~$\Phi$.

We refer to~\cite[$\S$7]{MPP3}
for more details and properties of this bijection,
reformulation of Theorem~\ref{t:MPP3-identity} into
the lozenge language and several applications.
We should also mention that our
deformation \ts $F_{abc}(\bx\ts | \ts \by)$ \ts for $x_i=q^{i}$,
$y_j = -q^{-j}$, is well-known
as a $q$-Racah special case studied in~\cite{BGR},
see~\cite[$\S$9.6]{MPP3} for a detailed explanation.

\smallskip
Now, consider the \emph{trapezoid} (sawtooth)
\emph{region} \ts $\Ga(c_1,\ldots,c_k)$ \ts defined as in Figure~\ref{f:Sawtooth}.
\begin{figure}[hbt]
\psfrag{c1}{$c_1$}
\psfrag{c2}{$c_2$}
\psfrag{c3}{$c_3$}
\psfrag{c4}{$c_4$}
\psfrag{x1}{$x_1$}
\psfrag{x2}{$x_2$}
\psfrag{y1}{$y_1$}
\psfrag{y2}{$y_2$}
\psfrag{G}{$\Ga$}
\begin{center}
\epsfig{file=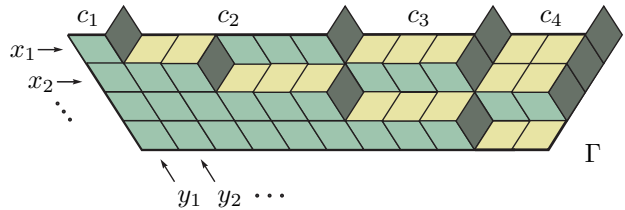,width=8.5cm}
\end{center}
\caption{Lozenge tiling of a trapezoid region \ts
$\Ga=\Ga(1,5,3,2)$ \ts for \ts $k=4$. }
\label{f:Sawtooth}
\end{figure}
This region corresponds to Theorem~\ref{t:hor-cut} with $a=0$ and
$b_1=1+c_1$, $b_2=1+c_1+c_2$, \ldots, $b_k=1+c_1+\ldots + c_k$.
For the example in Figure~\ref{f:Sawtooth} the region $\Ga(1,5,3,2)$ \ts
corresponds to \ts $\ca=\{(1,1), (1,2), (1,3), (1,4)\}$ \ts and
\ts $\cb=\{(1,2), (1,7) , (1,10), (1,12)\}$,
as in Figure~\ref{f:two-cuts} (left).

In fact, the lozenge tilings of regions $\Ga(\bc)$ are heavily
studied in integrable probability, see~\cite{Nov,Pet}.  The total
number $N(\la)$ of such tilings is given by the formula
$$
N(\la) \, = \, s_\la(1^k) \, = \, \prod_{1\le i < j\le k} \.
\frac{b_j-b_i}{j-i}\,,
$$
where \ts $\la=(\la_1,\ldots,\la_k)$, and \ts $\la_i=b_{k+1-i}-k+i$ \ts
for all $1\le i\le k$. We refer to~\cite[$\S$19]{Gor} for an interesting
discussion of this special case, further results and references.

Theorem~\ref{t:hor-cut} thus gives a multivariate deformation
of \ts $N(\la)$.  The weights \ts $1/(x_i+y_j)$ \ts are assigned
to light green lozenges and bottom halves of dark green lozenges
as shown in Figure~\ref{f:Sawtooth}.  Yellow lozenges get weight~1.
The weight of a tiling is then a product of weights of all lozenges.
The resulting partition function is then the sum of all weights
of lozenge tilings of fixed~$\Ga$ as above.  By
Theorem~\ref{t:hor-cut}, this function is symmetric.

\begin{figure}[hbt]
\psfrag{a}{$\bba$}
\psfrag{b}{$\bbb$}
\psfrag{D}{$\De$}
\psfrag{m}{$m$}
\psfrag{x1}{$x_1$}
\psfrag{x2}{$x_2$}
\psfrag{y1}{$y_1$}
\psfrag{y2}{$y_2$}
\begin{center}
\epsfig{file=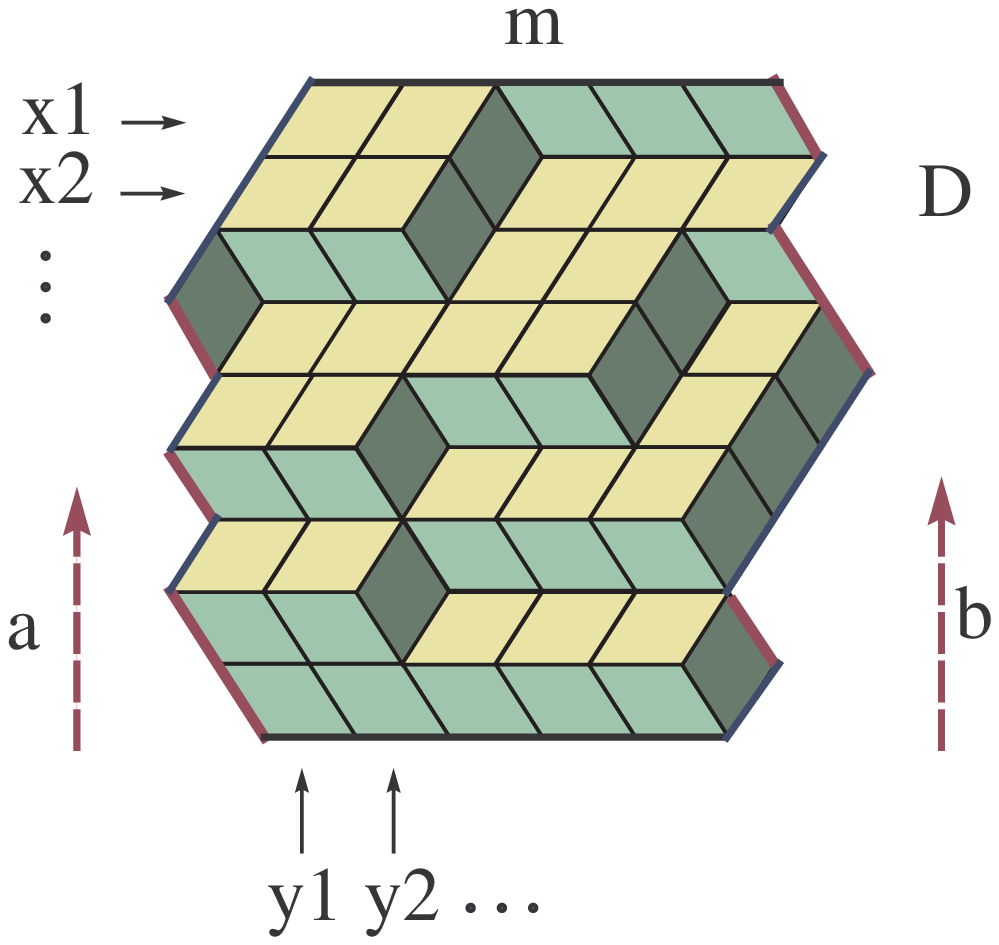,width=5.1cm}
\end{center}
\caption{Lozenge tiling of a parallelogram region \ts
$\De=\De(\bba,\bbb,m)$, for \ts $k=4$, $\ell=m=5$. }
\label{f:parallel}
\end{figure}

Note that in every simply connected region tileable with
lozenges, the boundary has $2k, 2\ell$ and~$2m$ edges
in each of the three directions.  A \emph{parallelogram region} $\De$
is defined to have two intervals of $m$ consecutive (say, horizontal)
edges.  This condition automatically implies that between the horizontal
edges there are $(k+\ell)$ edges on each side, see Figure~\ref{f:parallel}.
The region is thus encoded $\De=\De(\bba,\bbb,m)$ by two increasing sequences \ts
$\bba=(a_1,\ldots,a_k)$ \ts and \ts $\bbb=(b_1,\ldots,b_k)$, where
\ts $1\le a_i, b_i\le k+\ell$ \ts.
For example, for the region in the figure, we have $k=4$, $\ell=m=5$, and
the sequences are \ts $\bba=(1,2,3,5)$, and \ts $\bbb=(2,6,7,9)$.

In these notation, Theorem~\ref{t:vert-cut} proves the $\bx$-symmetry of the
multivariate deformation of the number \ts $N(\bba,\bbb,m)$ \ts of tilings
of a parallelogram region \ts $\De(\bba,\bbb,m)$ \ts defined above.
Here the weighting is similar to the previous case but somewhat more awkward,
see Figure~\ref{f:parallel}.  While
we do not know (or do not recognize) the number \ts $N(\bba,\bbb,m)$,
let us mention that it has a determinant formula via the LGV--lemma,
which is also the key to the proof of Theorem~\ref{t:vert-cut}.

\bigskip

\section{Combinatorial proofs}\label{s:comb}

\subsection{The $2$-symmetry case} \label{s:comb-2sym}
We start with a special case $a=c=1$ in Theorem~\ref{t:MPP3-identity}
(cf.~$\S$\ref{ss:finrem-2sym}).

\begin{lemma}[{$2$-symmetry}] \label{l:2sym}
Let \ts $A=(2,1)$, $B=(1,m)$, $m\ge 1$.  Let
$$F_{m}(x_1,x_2\. | \. y_1,\ldots,y_m) \. := \. \sum_{\ga: \. A\to B}
\. \prod_{(i,j)\in \ga} \. \frac{1}{x_i+y_j}\..
$$
Then $F_{m}$ is symmetric in $\bx=(x_1,x_2)$.
\end{lemma}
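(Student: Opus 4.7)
The lattice paths here are so simple that I would enumerate them explicitly: any {\tt Up}-{\tt Right} path $\ga : A=(2,1)\to B=(1,m)$ contains exactly one {\tt Up} step, occurring in some column $k\in\{1,\ldots,m\}$, and passes through the cells $(2,1),(2,2),\ldots,(2,k),(1,k),(1,k+1),\ldots,(1,m)$. Thus
$$
F_m \, = \, \sum_{k=1}^m \. w_k\,, \qquad w_k \. = \. \prod_{j=1}^{k} \frac{1}{x_2+y_j} \. \cdot \. \prod_{j=k}^{m} \frac{1}{x_1+y_j}\,,
$$
with the characteristic overlap $\frac{1}{(x_1+y_k)(x_2+y_k)}$ at the turning column. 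It will be convenient to set $u_k := \prod_{j=1}^k (x_2+y_j)^{-1}$ and $v_k := \prod_{j=k}^m (x_1+y_j)^{-1}$, with $u_0 = v_{m+1} = 1$, so that $w_k = u_k v_k$.

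The plan is to make the sum $\sum_k u_k v_k$ telescope. Starting from the partial fraction
$$
\frac{1}{x_2+y_k} \. - \. \frac{1}{x_1+y_k} \, = \, \frac{x_1-x_2}{(x_1+y_k)(x_2+y_k)}\,,
$$
one verifies in one line the identity
$$
u_k v_{k+1} \. - \. u_{k-1} v_k \, = \, (x_1-x_2)\ts u_k v_k \qquad (1\le k\le m)\ts.
$$
Summing from $k=1$ to $m$ collapses the left side to $u_m v_{m+1} - u_0 v_1$, and hence
$$
(x_1-x_2)\. F_m \, = \, \prod_{j=1}^m \frac{1}{x_2+y_j} \. - \. \prod_{j=1}^m \frac{1}{x_1+y_j}\,.
$$
The right-hand side is antisymmetric under $x_1\leftrightarrow x_2$, as is the factor $(x_1-x_2)$, so their quotient $F_m$ is symmetric, which is the claim of the lemma.

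No step in this plan looks like a real obstacle: the combinatorics of paths is immediate, and the telescoping identity reduces to the two-line verification above. As a byproduct the argument produces the closed form
$$
F_m\bigl(x_1,x_2\.|\.y_1,\ldots,y_m\bigr) \, = \, \frac{\prod_{j=1}^m(x_1+y_j) \. - \. \prod_{j=1}^m(x_2+y_j)}{(x_1-x_2)\ts\prod_{j=1}^m(x_1+y_j)(x_2+y_j)}\,,
$$
from which the $\bx$-symmetry is manifest. This ``divided-difference'' shape also suggests a possible template (row-by-row telescoping, or an inductive decomposition peeling off one row at a time) that might be adapted in the proof of the more substantial Theorem~\ref{t:hor-cut}.
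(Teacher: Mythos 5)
Your proof is correct, and it is essentially the same argument as the paper's: the paper also enumerates the $m$ paths by the column of the unique {\tt Up} step and establishes the divided-difference identity $(x_1-x_2)\ts F_m=\prod_j(x_2+y_j)^{-1}-\prod_j(x_1+y_j)^{-1}$ by exactly this telescoping cancellation (performed there after clearing denominators, on the polynomial $G_m=F_m\cdot\prod_{i,j}(x_i+y_j)$). Nothing to add.
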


\begin{figure}[hbt]
\includegraphics[width=14.0cm]{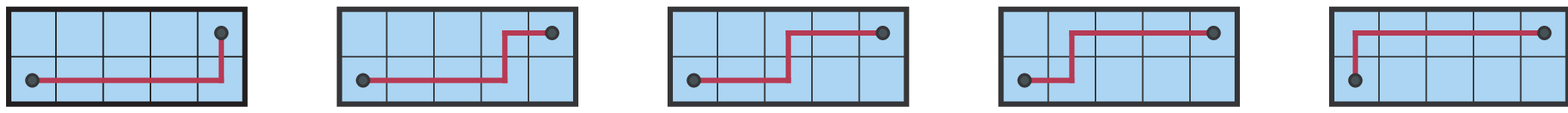}
\caption{Five paths~$\ga{}: (2,1)\to (1,5)$ in the $2$-symmetry Lemma~\ref{l:2sym}. }
\label{f:2sym}
\end{figure}

\begin{proof}[Proof]
There are $m$ paths in this case, see Figure~\ref{f:2sym}.  We have:
$$
F_{m} \, = \, G_m(x_1,x_2 \ts | \ts y_1,\ldots,y_m) \, \prod_{i=1}^2 \prod_{j=1}^m
\. \frac{1}{x_i+y_j}\,,
$$
where
$$G_m \, = \, (x_1+y_1)\cdots (x_1+y_{m-1}) \ts + \ts
(x_1+y_1)\cdots (x_1+y_{m-2})\cdot (x_2+y_m)  \ts + \ldots  + \ts
(x_2+y_2)\cdots (x_2+y_{m}).
$$
The symmetry of $G_m$ with respect to $x_1,x_2$ follows from the identity
$$(\diamond) \quad\qquad G_m=\frac{(x_1+y_1)(x_1+y_2)\ldots (x_1+y_m)-(x_2+y_1)(x_2+y_2)\ldots (x_2+y_m)}{x_1-x_2}\,.
$$
Indeed, the identity $(\diamond)$ can be proved by a telescopic cancellation:
\begin{align*}
G_m\cdot (x_1-x_2)\, &= \, (x_1+y_1)(x_1+y_2)\ldots (x_1+y_{m-1})\bigl[(x_1+y_m)-(x_2+y_m)\bigr]\\
& \qquad + \, (x_1+y_1)\cdots (x_1+y_{m-2})(x_2+y_m)\bigl[(x_1+y_{m-1})-(x_2+y_{m-1})\bigr] \\
& \qquad + \ \ldots \ +\, (x_2+y_2)(x_2+y_3)\cdots (x_2+y_{m})\bigl[(x_1+y_1)-(x_2+y_1)\bigr]\\
& = \, (x_1+y_1)(x_1+y_2)\ldots (x_1+y_m) \. - \. (x_2+y_1)(x_2+y_2)\ldots(x_2+y_m)\ts.
\end{align*}
Another way to prove~$(\diamond)$ is to note that both
parts are multilinear polynomials with respect to $y_1,\ldots,y_m$
and to check that they agree when $y_i\in\{-x_1,-x_2\}$ for all $i$.
\end{proof}

\smallskip

\subsection{Proof of Theorem~\ref{t:hor-cut}}
It suffices to show that $F_{\ca,\cb}$ is symmetric in \ts $(x_i,x_{i+1})$, for all
$1\le i<m$.  Fix a collection of paths $\Ups$ and consider only rows $i$ and~$(i+1)$.
Remove all columns where both squares are in $\Ups$ but not connected by a path, and
those columns where both squares are empty.
This results in several $2$-row rectangles, each connected
by a path from lower left corner to upper right corner.  Apply the $2$-symmetry
lemma to each non-empty rectangle to conclude that the sum of all $w(\Ups)$ is
symmetric in $(x_i,x_{i+1})$, as desired. \ $\sq$

\begin{figure}[hbt]
\psfrag{A1}{\small $A_1$}
\psfrag{A2}{\small $A_2$}
\psfrag{A3}{\small $A_3$}
\psfrag{A4}{\small $A_4$}
\psfrag{B1}{\small $B_1$}
\psfrag{B2}{\small $B_2$}
\psfrag{B3}{\small $B_3$}
\psfrag{B4}{\small $B_4$}
\psfrag{x1}{$x_1$}
\psfrag{x2}{$x_2$}
\psfrag{x3}{$x_3$}
\psfrag{x4}{$x_4$}
\epsfig{file=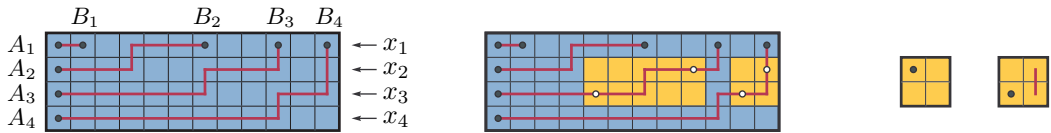,width=14.2cm}
\caption{\underline{Left}: Using $2$-symmetry to prove the symmetry of
$F_{\ca,\cb}$ in $(x_2,x_3)$. \ts \underline{Right}: Two impossible configurations. }
\label{f:ind-sym}
\end{figure}

\begin{rem}{\rm }
The proof above implicitly uses the claim
that $\ca$ and $\cb$ are as in the theorem.  Indeed, otherwise
we can have e.g.\ a rectangle with upper left square in $\ca$ and
no point of $\ca$ below it, or a point in $\cb$ in the bottom row
without the point of $\cb$ above it
(see Figure~\ref{f:ind-sym}).
\end{rem}


\smallskip

\subsection{Proof of Theorem~\ref{t:vert-cut}}
We follow the proof of Theorem~\ref{t:hor-cut} given above.  First, switch the
coordinates $\bx \lra \by$.  Then $\cb$ is as in Theorem~\ref{t:hor-cut}, while
$\ca$ are on the bottom row.  We need to prove the $\bx$-symmetry in this case.
Apply the $2$-symmetries in $(x_i,x_{i+1})$ in exactly the same way and notice
that the forbidden configuration as in the remark above do no appear.  The
details are straightforward. \ $\sq$

\smallskip

\subsection{The ultimate generalization} \label{ss:comb-gen}
The proofs above suggest a common generalization of
Theorems~\ref{t:hor-cut} and~\ref{t:vert-cut}.  We chose to
postpone it until this point to avoid overwhelming the reader.

\begin{thm}[Main theorem] \label{t:main-gen}
Let \ts $m,n,k\ge 1$, \ts $\bba=(a_1,\ldots,a_n)$,
$\bbb=(b_1,\ldots,b_n)$, where
$$
a_1+\ldots + a_n \. = \. b_1+\ldots + b_n \. = \. k\., \quad
\text{where} \ \. 0\le a_i,b_i \le m\ts.
$$
Let $\ca$ be a collection of points \ts $A_1,\ldots,A_k \in [m\times n]$,
with exactly $a_i$ points on the \emph{bottom} of $i$-th column.
Similarly, let $\cb$ be a collection of points \ts $B_1,\ldots, B_k \in [m\times n]$,
with exactly $b_i$ points on the \emph{top} of $i$-th column.
Here the order $\ca$ and $\cb$ is from left to right,
and within a column from top to bottom, see Figure~\ref{f:gen}.
Then the multivariate function
$$
F_{\ca,\cb}\bigl(x_1,\ldots,x_m\, | \, y_1,\ldots,y_n\bigr),
$$
defined in~\eqref{eq:F-def}, is symmetric in \ts $\bx=(x_1,\ldots,x_{m})$.
\end{thm}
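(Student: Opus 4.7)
The plan is to generalize the combinatorial argument of Theorem~\ref{t:hor-cut}, using only the $2$-symmetry Lemma~\ref{l:2sym}. Since the adjacent transpositions $(x_i,x_{i+1})$, $1 \le i < m$, generate the symmetric group $S_m$, it suffices to fix $i$ and prove invariance of $F_{\cA,\cB}$ under the swap $x_i\leftrightarrow x_{i+1}$. For each $\Ups\colon\cA\to\cB$, I look at the restriction of $\Ups$ to the two-row \emph{strip} formed by rows $i$ and $i+1$, and I call a column of this strip \emph{cut} if its two cells are either both empty in $\Ups$ or both occupied by $\Ups$ but belonging to distinct paths, and \emph{active} otherwise. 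Each cut column contributes an $(x_i,x_{i+1})$-symmetric factor to $w(\Ups)$, namely $1$ (both empty) or $\frac{1}{(x_i+y_c)(x_{i+1}+y_c)}$ (both occupied by different paths).

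Removing the cut columns splits the strip into disjoint maximal $2$-row rectangles $R_1,R_2,\ldots$. The structural heart of the argument is the claim that each $R_t=[i,i+1]\times[l_t,r_t]$ contains the restriction of exactly one path of $\Ups$, and that this restriction is a monotone Up-Right L-shape entering at the lower-left corner $(i+1,l_t)$ and exiting at the upper-right corner $(i,r_t)$ via a single Up-step at some column $k\in[l_t,r_t]$. Granting this, fix ``frozen data'' consisting of the restriction of $\Ups$ to the complement of the strip together with the list of cut columns and the entry/exit data of each active rectangle. The sum of $w(\Ups)$ over all $\Ups$ compatible with this data factors as a product over the $R_t$, where the $t$-th factor is precisely the sum from Lemma~\ref{l:2sym} over the choice of Up-step column inside $R_t$, hence symmetric in $(x_i,x_{i+1})$. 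Multiplying by the symmetric cut-column factors and summing over all choices of frozen data then yields the required $(x_i,x_{i+1})$-symmetry of $F_{\cA,\cB}$.

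The main obstacle is the structural claim about active rectangles, which is the exact analog of the ``forbidden configurations'' obstruction discussed in the remark following the proof of Theorem~\ref{t:hor-cut}. For instance, a path entering the strip at a stray starting point $(i,c)\in\cA$ inside an active rectangle would produce an asymmetric horizontal run in row $i$ with no way to cancel it. The hypothesis of Theorem~\ref{t:main-gen} is designed precisely to rule such configurations out: the $\cA$-points in column $c$ occupy the $a_c$ bottom-most cells of that column, so $(i,c)\in\cA$ forces $(i+1,c)\in\cA$; then two distinct paths must start at column $c$, one in each of rows $i$ and $i+1$, and since each blocks the other from leaving its row they both proceed horizontally, turning column $c$ into a cut column. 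The dual argument, using consecutivity of $\cB$-points from the top of each column, handles the exit side. Consequently no $\cA$- or $\cB$-point lies inside any active rectangle, forcing any entry into $R_t$ to occur at $(i+1,l_t)$ (from the adjacent cut column $l_t-1$, from an Up-step out of row $i+2$, or from the grid boundary), and any exit to occur at $(i,r_t)$ symmetrically. Uniqueness of the path in each $R_t$ follows because a second path in $R_t$ would force some column of $R_t$ to have both cells in distinct paths, violating the definition of an active column; the claimed L-shape structure of that unique path is then a short case analysis.
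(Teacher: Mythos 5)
Your overall strategy --- reducing to adjacent transpositions, isolating the two-row strip, discarding the columns that contribute $(x_i,x_{i+1})$-symmetric factors, and applying Lemma~\ref{l:2sym} to what remains --- is exactly the argument the paper gives (its proof of Theorem~\ref{t:main-gen} is a one-line reference back to the proof of Theorem~\ref{t:hor-cut}). However, the structural claim you put at the heart of the argument is false: a maximal active rectangle $R_t$ need \emph{not} contain a single path entering at $(i+1,l_t)$ and exiting at $(i,r_t)$. A path may enter the \emph{bottom} row of the strip at an interior column of $R_t$, either via an {\tt Up}-step from row $i+2$ or at an $\cA$-point $(i+1,c)$ with $(i,c)\notin\cA$; your deduction ``no $\cA$- or $\cB$-point lies inside any active rectangle'' only rules out $\cA$-points in row~$i$ (resp.\ $\cB$-points in row~$i+1$), not the other cell of the strip. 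Dually, a path may leave the top row at an interior column via an {\tt Up}-step to row $i-1$ or at a $\cB$-point $(i,c)$. Concretely, take $m=4$, $n=2$, $k=2$, $\bba=\bbb=(1,1)$: the unique $\Ups$ consists of two vertical paths, and for $i=2$ both columns of the strip are active and form a single $2\times2$ rectangle containing \emph{two} path pieces, neither cornered as you claim. Your uniqueness argument breaks for the same reason: two pieces can occupy disjoint column intervals inside one maximal run of active columns, so no column ever carries two distinct paths and no contradiction arises.

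The gap is repairable, and the repair shows what the hypotheses are really for. Inside a maximal run of active columns, the column intervals $[e_j,f_j]$ swept by the individual path pieces must be pairwise disjoint (an overlap would produce a column met by two distinct paths, i.e.\ a cut column) and must cover the run, so they partition it into consecutive blocks. What the consecutivity of $\cA$ from the bottom and of $\cB$ from the top actually guarantees is that no path \emph{starts} in row $i$ of an active column and no path \emph{ends} in row $i+1$ of an active column; hence each piece enters its own block at $(i+1,e_j)$, exits at $(i,f_j)$, and is an L-shape whose only freedom is the position of its single {\tt Up}-step in $[e_j,f_j]$. Since the endpoints $e_j,f_j$ are determined by the data outside the strip (together with the $\cA$-, $\cB$-points), the sum over compatible $\Ups$ factors over these blocks, and Lemma~\ref{l:2sym} applies to each block separately. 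I note that the paper's own proof asserts the same too-strong ``one corner-to-corner path per rectangle'' statement and leaves the details to the reader, so your account is faithful to it; but as written, your justification of the key claim does not hold, and the finer block decomposition is needed to make the argument correct.
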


\begin{figure}[hbt]
\psfrag{A1}{\footnotesize $A_1$}
\psfrag{A2}{\footnotesize $A_2$}
\psfrag{A3}{\footnotesize $A_3$}
\psfrag{A4}{\footnotesize $A_4$}
\psfrag{A5}{\footnotesize $A_5$}
\psfrag{A6}{\footnotesize $A_6$}
\psfrag{A7}{\footnotesize $A_7$}
\psfrag{B1}{\footnotesize $B_1$}
\psfrag{B2}{\footnotesize $B_2$}
\psfrag{B3}{\footnotesize $B_3$}
\psfrag{B4}{\footnotesize $B_4$}
\psfrag{B5}{\footnotesize $B_5$}
\psfrag{B6}{\footnotesize $B_6$}
\psfrag{B7}{\footnotesize $B_7$}
\epsfig{file=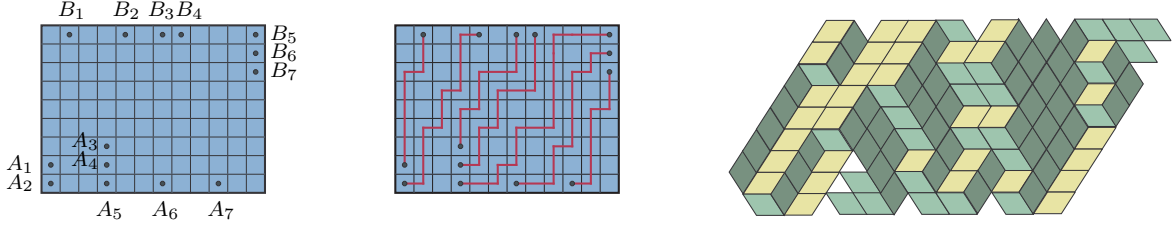,width=15.75cm}
\caption{\underline{Left}: Examples of a collection of points $\ca, \cb$, and
non-intersecting paths in Theorem~\ref{t:main-gen},
with \ts $\bba=(2,0,0,3,0,0,1,0,0,1,0,0)$ \ts and \ts $\bbb=(0,1,0,0,1,0,1,1,0,0,0,3)$. \ts
\underline{Right}: The corresponding lozenge tiling. }
\label{f:gen}
\end{figure}

The theorem generalizes Theorem~\ref{t:hor-cut} in a straightforward
way: take \ts $\bba=(k,0,\ldots,0)$ \ts and \ts $\bbb \in \{0,1\}^n$,
with $k$ zeroes.  It also generalizes Theorem~\ref{t:vert-cut} as follows: switch
coordinates \ts $\bx\lra \by$, and take both \ts
$\bba,\bbb \in \{0,1\}^n$, with $k$ zeroes. Of course, Theorem~\ref{t:main-gen}
is much more general, even if in some cases the result is vacuously true,
as there are no possible collections of non-intersecting \ts {\tt Up-Right}
paths \ts $\ga_i: A_i\to B_i$\ts.

\smallskip

\begin{proof}[Proof of Theorem~\ref{t:main-gen}]
The proof follows verbatim the proof of Theorem~\ref{t:hor-cut} given above.
We prove the $\bx$-symmetry via $2$-symmetries in $(x_i,x_{i+1})$
in exactly the same way.  Indeed, notice that the forbidden
configuration as in the remark above do no appear.
The details are straightforward.
\end{proof}

\bigskip

\section{Algebraic proofs}\label{s:alg}

\subsection{Preliminaries} \label{ss:alg-pre}
Fix $m,n \ge 1$ and let
$$
P_k(t) \. := \. (t+y_1)(t+y_2)\cdots (t+y_k), \ \ k=0,\ldots,n.
$$
For \ts $s=1,\ldots, m$, and \ts $k=1,\ldots, n$, define
$$
Q_{s,k}(t) \, := \, \prod_{j=s}^m \. \frac{1}{x_j-t} \, \mod \, P_k(t)\ts.
$$
Note that this expression is well defined: the polynomials $(x_j-t)$
are invertible modulo $P_k(t)$ in the ring $\cR[t]$, where $\cR=\cc(\bx,\by)$.
In other words, $\cR[t]$ is the ring of polynomials in~$t$ with
coefficients in the field of rational functions in $x_i$'s and $y_j$'s.

Denote
$$F_{s,k}\. := \. \sum_{\gamma:\ts (m,1)\to (s,k)} \. w(\gamma)\ts.
$$
We use the following description of $F_{s,k}$ which simultaneously proves
a $\bx$-symmetry and $\by$-symmetry of $F_{s,k}$.
This is the $k=1$ case of Theorem~\ref{t:MPP3-identity} generalizing
Lemma~\ref{l:2sym} to all~$m\ge 2$ (see also~$\S$\ref{ss:finrem-2sym}).

\begin{lemma}\label{l:path} For \ts $s=1,\ldots,m$ and \ts $k=1,\ldots,n$, we have:
$$
F_{s,k}\. = \. [t^{k-1}] \ts Q_{s,k}(t)\ts.
$$
In particular $F_{s,k}$ is symmetric with respect to \ts $(x_s,\ldots,x_m)$,
and with respect to \ts $(y_1,\ldots,y_k)$.
\end{lemma}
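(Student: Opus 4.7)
The plan is to introduce the algebraic candidate $\tilde F_{s,k} := [t^{k-1}]Q_{s,k}(t)$, verify that it satisfies the same recurrence and initial data as the path sum $F_{s,k}$, and conclude that the two agree; the two symmetry statements will then fall out transparently from the algebraic definition. Combinatorially, decomposing a lattice path into $(s,k)$ according to its final step (Right from $(s,k-1)$ or Up from $(s+1,k)$) gives
\[
(x_s+y_k)\,F_{s,k} \;=\; F_{s,k-1} \,+\, F_{s+1,k} \qquad (s\le m,\ k\ge 2),
\]
with the convention $F_{m+1,k}:=0$ and base case $F_{s,1}=\prod_{j=s}^m (x_j+y_1)^{-1}$.

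To reproduce this recursion algebraically, the plan is to extract two explicit descent relations among the polynomials $Q_{\cdot,\cdot}$. The factorization $\prod_{j=s+1}^m(x_j-t)^{-1} = (x_s-t)\prod_{j=s}^m(x_j-t)^{-1}$ shows that $Q_{s+1,k}(t)\equiv (x_s-t)\,Q_{s,k}(t)\pmod{P_k(t)}$; comparing $t^k$-coefficients (zero on the left, $-\tilde F_{s,k}$ on the right) pins down the correction as
\[
Q_{s+1,k}(t) \;=\; (x_s-t)\,Q_{s,k}(t) \,+\, \tilde F_{s,k}\,P_k(t).
\]
Similarly, $P_{k-1}(t)\mid P_k(t)$ forces $Q_{s,k}(t)\equiv Q_{s,k-1}(t)\pmod{P_{k-1}(t)}$, and a degree count yields
\[
Q_{s,k}(t) \;=\; Q_{s,k-1}(t) \,+\, \tilde F_{s,k}\,P_{k-1}(t).
\]

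The next step is to read off $[t^{k-1}]$ of the first identity. This needs $[t^{k-2}]Q_{s,k}(t)$, which by the second identity equals $\tilde F_{s,k-1} + \tilde F_{s,k}(y_1+\cdots+y_{k-1})$; combined with $[t^{k-1}]P_k(t)=y_1+\cdots+y_k$, the elementary-symmetric sums telescope and leave exactly
\[
(x_s+y_k)\,\tilde F_{s,k} \;=\; \tilde F_{s,k-1}+\tilde F_{s+1,k}.
\]
The base case $\tilde F_{s,1}=\prod_{j=s}^m(x_j+y_1)^{-1}$ is immediate by evaluating the constant $Q_{s,1}(t)$ at $t=-y_1$, and $\tilde F_{m+1,k}=0$ for $k\ge 2$ since $Q_{m+1,k}=1$; a short induction (on $m-s+k$, say) then delivers $\tilde F_{s,k}=F_{s,k}$.

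Finally, both symmetries are free from the algebraic description. The product $\prod_{j=s}^m(x_j-t)^{-1}$ is visibly invariant under permutations of $(x_s,\ldots,x_m)$, and the quotient ring $\cR[t]/(P_k(t))$ depends on $(y_1,\ldots,y_k)$ only through the symmetric polynomial $P_k$, so permutations in either set preserve $Q_{s,k}(t)$ and hence its $(k-1)$-st coefficient. The main obstacle is the middle computation: ensuring that the coefficient bookkeeping truly produces the factor $(x_s+y_k)$ rather than some other linear combination, which rests on the clean telescoping identity $[t^{k-1}]P_k(t)-[t^{k-2}]P_{k-1}(t)=y_k$.
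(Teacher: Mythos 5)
Your proposal is correct and follows essentially the same route as the paper: both establish the last-step recurrence $(x_s+y_k)F_{s,k}=F_{s,k-1}+F_{s+1,k}$ and show that $[t^{k-1}]Q_{s,k}$ satisfies it by manipulating the defining congruences modulo $P_k(t)$, then match base cases and read off both symmetries from the form of $Q_{s,k}$. The only cosmetic difference is that the paper adds the two congruences and uses the degree bound to get a single polynomial identity before extracting $[t^{k-1}]$, whereas you lift each congruence to a polynomial identity separately and telescope the elementary symmetric sums; both computations are valid.
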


\begin{proof}
By definition,
$$F_{m,1} \. = \. \frac 1{x_m+y_1} \. = \. Q_{m,1}(t).
$$
Observe that
$$
F_{s,k} \. = \. \frac1{x_s+y_k} \. \bigl(F_{s,k-1}\ts +\ts F_{s+1,k}\bigr),
$$
for \ts $s=1,\ldots,m$, and \ts $k=1,\ldots,n$, such that \ \ts $(s,k)\ne (m,1)$.
Here we use boundary values \ts $F_{m+1,k}=F_{s,0}=0$.
Note that
$$
(t+y_k) \ts Q_{s,k}(t) \. \equiv \.  (t+y_k) \ts Q_{s,k-1}(t) \mod P_{k-1}(t) \ \ \. \text{and}
\mod (t+y_k).
$$
Thus, the congruence holds modulo $P_k(t)$~:
$$
(t+y_k) \ts Q_{s,k}(t) \. \equiv \.  (t+y_k) \ts Q_{s,k-1}(t) \. \mod \. P_{k}(t).
$$
Similarly,
$$
(x_s-t) \ts Q_{s,k}(t) \. = \.  Q_{s+1,k}(t) \. \mod  \. P_k(t).
$$
Adding these two congruences, we obtain
$$
(x_s+y_k) \ts Q_{s,k}(t) \. \equiv \.  Q_{s+1, k}(t)\ts + \ts (t+y_k) \ts Q_{s,k-1}(t) \.  \mod \. P_k(t).
$$
Now observe that both the LHS and the RHS are polynomials of degree at most $(k-1)$ in $t$.
Thus we have an equation of polynomials:
$$
(x_s+y_k) \ts Q_{s,k}(t) \. = \.  Q_{s+1, k}(t) \ts + \ts (t+y_k) \ts Q_{s,k-1}(t).
$$
Taking the coefficients of $t^{k-1}$, we see that the double
sequence
$$\bigl\{[t^{k-1}] \ts Q_{s,k}\bigr\}
$$
satisfies the same recurrence and initial conditions as $F_{s,k}$. This implies the result.
\end{proof}

\smallskip

\subsection{Non-intersecting paths} \label{ss:alg-LGV}
We recall the \emph{Lindstr\"om–-Gessel–-Viennot lemma}:

\begin{thm}[{LGV--lemma}] \label{t:LGV-e}
Let $G=(V,E)$ be a finite acyclic directed graph. 
Fix \ts $k\ge 1$.
Let \ts $\ca=\{A_1,\ldots,A_k\}$, \ts $\cb=\{B_1,\ldots,B_k\}\subset V$ \ts
be two $($not necessarily disjoint$)$ sets of vertices, such that  $|\ca|=|\cb|=k$.
Let $\rR$ be a commutative ring, and let \ts $w: E \to \rR$ \ts be a
weight function. For a subset $S\subset E$, define a weight
$$
w(S) \. : = \. \prod_{e\in S} \. w(e), \quad \text{and} \ \ \. w(\emp)\ts :=\ts 1\ts.
$$
Consider a matrix $U=(u_{ij})_{i,j=1}^k$, where
$$
u_{ij} \. := \. \sum_{\gamma : \ts A_i\to B_j} \. w(\gamma)
$$
is the sum of weights of all paths from $A_i$ to $B_j$. Then:
$$
\det U \, = \, \sum_{\pi\in S_k} \, \,
\sum_{\substack{\Ups=(\ga_1,\ldots,\ga_k) \\ \ga_i\ts{}:\ts{}
A_i\to B_{\pi(i)}}} \. \sign(\pi) \cdot w(\Ups),
$$
where the second sum is over 
all collections of vertex-disjoint paths~$\ga_i$ from $A_i$ to $B_{\pi(i)}$.
\end{thm}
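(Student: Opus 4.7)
The plan is to prove the LGV lemma by the classical sign-reversing involution argument. I would first expand the determinant as
$$
\det U \, = \, \sum_{\pi \in S_k} \sign(\pi) \prod_{i=1}^k u_{i,\pi(i)} \, = \, \sum_{\pi \in S_k} \sign(\pi) \sum_{\Ups = (\ga_1,\ldots,\ga_k)} w(\Ups),
$$
where the inner sum ranges over all tuples of paths with \ts $\ga_i : A_i \to B_{\pi(i)}$, not necessarily vertex-disjoint. Call a pair $(\pi,\Ups)$ \emph{good} if the $\ga_i$ are pairwise vertex-disjoint and \emph{bad} otherwise. The problem then reduces to showing that the bad contributions cancel.

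To exhibit this cancellation, fix once and for all a total order on the vertex set $V$. Given a bad pair $(\pi,\Ups)$, let $(i,j)$ with $i<j$ be the lexicographically smallest pair of indices such that $\ga_i$ and $\ga_j$ share a vertex, and let $v$ be the smallest such common vertex of $\ga_i$ and $\ga_j$ in the chosen order. Define $\iota(\pi,\Ups)=(\pi',\Ups')$ by exchanging the portions of $\ga_i$ and $\ga_j$ that follow $v$: the new path $\ga_i'$ runs along $\ga_i$ from $A_i$ to $v$ and then along $\ga_j$ from $v$ to $B_{\pi(j)}$, and symmetrically for $\ga_j'$; leave the remaining paths unchanged and put $\pi'=\pi\circ (i\,j)$.

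Three points then need checking. (a) $w(\Ups')=w(\Ups)$, since $\Ups'$ uses the same multiset of edges as $\Ups$; acyclicity of $G$ ensures that each of the two affected paths traverses $v$ exactly once, so the swap is well-defined. (b) $\sign(\pi')=-\sign(\pi)$, so $\iota$ is sign-reversing. (c) $\iota$ is an involution: applied to $(\pi',\Ups')$ it recovers the same lex-smallest intersecting pair $(i,j)$ and the same smallest common vertex $v$, and then undoes the swap. Granting (a)--(c), the bad pairs cancel in signed pairs and only the good ones survive, yielding exactly the claimed identity.

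The main obstacle is verifying (c): one must check that the swap does not create an intersecting pair with smaller lex-index, nor an earlier common vertex of $\ga_i'$ and $\ga_j'$ in the chosen order. This is where acyclicity of $G$ is used essentially, since it prevents the swapped tails from looping back to the segments preceding $v$ and thereby creating spurious intersections below~$v$. Once this is handled, the bookkeeping of signs and weights is routine and the theorem follows.
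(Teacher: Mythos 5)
The paper does not actually prove Theorem~\ref{t:LGV-e}; it cites \cite[$\S$5.4]{GJ} and \cite{Tal}. Your overall strategy --- expand the determinant and cancel the non-vertex-disjoint configurations by a weight-preserving, sign-reversing tail-swap involution --- is the standard one, and your points (a) and (b) are fine. But the canonical choice you build into $\iota$ breaks point (c), and you have misdiagnosed where the danger lies. The problem is not that a swapped tail might ``loop back'' past $v$ (acyclicity does rule that out; it also guarantees that your vertex $v$, the minimum of $\ga_i\cap\ga_j$ in the fixed total order, is preserved, since the common vertex set of the swapped pair is a subset of the old one and still contains $v$). The real problem is interaction with a \emph{third} path: the tail of $\ga_j$ beyond $v$, which after the swap becomes part of $\ga_i'$, may meet some $\ga_{j'}$ with $i<j'<j$. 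Nothing in the choice of $(i,j)$ as the lexicographically smallest intersecting pair forbids $\ga_j\cap\ga_{j'}\ne\emptyset$ for such $j'$, because the pair $(j',j)$ is lexicographically larger than $(i,j)$. Concretely, take $\ga_1=A_1\to v\to B_1$, $\ga_3=A_3\to v\to w\to B_3$, $\ga_2=A_2\to w\to B_2$ in the obvious acyclic digraph: the lex-smallest intersecting pair is $(1,3)$, but after swapping at $v$ the new $\ga_1'$ passes through $w\in\ga_2$, so the lex-smallest intersecting pair of the image is $(1,2)$, and applying $\iota$ again swaps $\ga_1'$ with $\ga_2$ rather than undoing the first swap. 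Hence $\iota^2\ne\mathrm{id}$ and the bad terms are not paired off, so the cancellation as written does not go through.

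The standard repair is to make every ingredient of the canonical choice manifestly stable under the swap: take $i$ minimal such that $\ga_i$ meets some other path; let $v$ be the \emph{first vertex along $\ga_i$} (in the direction of traversal, not in an arbitrary total order on $V$) that lies on another path; and let $j$ be the least index $\ne i$ with $v\in\ga_j$. Then the initial segment of $\ga_i$ strictly before $v$ meets no other path, that segment is untouched by the swap, and one checks that $i$, $v$ and $j$ are all recovered from $(\pi',\Ups')$, so $\iota$ is a genuine involution. With that modification (and your observations (a) and (b) unchanged) the argument becomes the classical Lindstr\"om--Gessel--Viennot proof that the paper points to.
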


For the proof, see~\cite[$\S$5.4]{GJ}, or~\cite{Tal} for a more general result.
Below, we will use the following ``vertex version'' of the LGV--lemma,
which easily follows from the above edge version.
In this corollary, a path is defined to be a set of vertices.

\begin{cor}[vertex--LGV]\label{c:LGV-v}
Let $G=(V,E)$ be a finite acyclic directed graph without multiple edges.
Fix \ts $k\ge 1$.
Let \ts $\ca=\{A_1,\ldots,A_k\},\cb=\{B_1,\ldots,B_k\}\subset V$ be two
(not necessarily disjoint) sets of vertices, such that \ $|\ca|=|\cb|=k$.
Let $\rR$ be a commutative ring, and let \ts $w: V \to \rR$ \ts  be a
weight function. For a subset $D\ssu V$, define a weight
$$
w(D) \. := \. \prod_{v\in D} \. w(v), \quad \text{and} \ \ \. w(\emp)\ts :=\ts 1\ts.
$$
Consider a matrix $U=(u_{ij})_{i,j=1}^k$, where
$$
u_{ij} \. : = \. \sum_{\gamma:\ts A_i\to B_j} \. w(\gamma)
$$
is the sum of weights of all paths $\ga$ from $A_i$ to $B_j$. Then
$$
\det U \, = \,
\sum_{\pi\in S_k} \sum_{\substack{\Ups=(\ga_1, \ldots,\ga_k) \\ \ga_i\ts{}:\ts{}
A_i\to B_{\pi(i)}}} \. \sign(\pi) \cdot w(\Ups),
$$
where the sum is over all 
collections of vertex-disjoint paths~$\ga_i$ from $A_i$ to $B_{\pi(i)}$.
\end{cor}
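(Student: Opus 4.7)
The plan is to deduce Corollary~\ref{c:LGV-v} from Theorem~\ref{t:LGV-e} by the standard \emph{vertex splitting} trick. First I would construct an auxiliary directed graph $G' = (V', E')$ as follows: each vertex $v \in V$ is replaced by an ordered pair $v^-, v^+ \in V'$ joined by a new ``splitting edge'' $e_v : v^- \to v^+$, and each original edge $u \to v$ of $E$ becomes an edge $u^+ \to v^-$ of $E'$. I assign weights $w'(e_v) := w(v)$ to the splitting edges and $w'(e) := 1$ to all other edges of $E'$. The graph $G'$ is finite and acyclic, since any cycle in $G'$ would project to a cycle in $G$.

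The key observation is a weight-preserving bijection between paths $\gamma : A_i \to B_j$ in $G$ and paths $\gamma' : A_i^- \to B_j^+$ in $G'$. Because $v^-$ has the unique out-edge $e_v$ and $v^+$ has the unique in-edge $e_v$, any path $\gamma'$ from $A_i^-$ to $B_j^+$ is forced to have the alternating form
\[
A_i^- \to A_i^+ \to v_1^- \to v_1^+ \to \cdots \to v_{\ell-1}^- \to v_{\ell-1}^+ \to B_j^- \to B_j^+,
\]
which, using the no-multiple-edges hypothesis on $G$, uniquely determines the path $\gamma = (A_i, v_1, \ldots, v_{\ell-1}, B_j)$ in $G$. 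Since only splitting edges carry non-trivial weight, $w'(\gamma') = w(A_i)\ts w(v_1) \cdots w(v_{\ell-1})\ts w(B_j) = w(\gamma)$.

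Next I would observe that vertex-disjointness is preserved under this bijection: two paths $\gamma_i, \gamma_j$ share a vertex $v \in V$ if and only if the corresponding $\gamma_i', \gamma_j'$ share one of the vertices $v^\pm \in V'$ (equivalently, the splitting edge $e_v$). Consequently, the matrix $U$ in the statement of Corollary~\ref{c:LGV-v} coincides term-by-term with the matrix produced by Theorem~\ref{t:LGV-e} applied to $G'$ with the point sets $\ca' := \{A_1^-, \ldots, A_k^-\}$ and $\cb' := \{B_1^+, \ldots, B_k^+\}$ (which are honest $k$-element sets because the $A_i$ are distinct and the $B_j$ are distinct, even when $\ca \cap \cb \ne \emptyset$, since $A_i^- \ne B_j^+$ always). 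Applying the edge version then delivers the desired determinant identity.

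I do not anticipate any real obstacle; the main conceptual point is recognizing that the forced alternating structure around each split vertex makes the two notions of ``disjointness'' coincide. The one place where care is needed is the no-multiple-edges assumption, which is used precisely to ensure the map sending an original edge $u \to v$ to the edge $u^+ \to v^-$ in $G'$ is injective, so that the bijection on paths is well defined.
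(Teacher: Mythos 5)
Your proof is correct, but it uses a different reduction to Theorem~\ref{t:LGV-e} than the paper does. You split every vertex $v$ into $v^-\to v^+$ and put the weight $w(v)$ on the splitting edge, so the whole graph is rebuilt and the path/disjointness correspondence is checked locally at each split pair. The paper instead keeps $G$ intact, adds only $k$ new source vertices $C_1,\ldots,C_k$ with edges $C_i\to A_i$, and assigns each edge $(XY)$ of the enlarged graph the weight $w(Y)$ of its head; then every vertex of a path from $A_i$ to $B_j$ is the head of exactly one edge of the corresponding path from $C_i$, so edge weights reproduce vertex weights, and vertex-disjointness transfers trivially since the $C_i$ are distinct and isolated from the rest. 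The paper's construction is lighter (no doubling of $V$ and $E$), while yours is the more standard and more robust gadget -- it would handle, say, mixed vertex-and-edge weights with no extra work. Both arguments genuinely need the no-multiple-edges hypothesis for the same reason: a path in the corollary is a set of vertices, and multiple edges would make the edge-path count overshoot the vertex-path count. Your parenthetical that $\ca'$ and $\cb'$ remain honest $k$-element sets even when $\ca\cap\cb\ne\emptyset$ is a nice touch that the paper leaves implicit. One small quibble: you locate the use of the no-multiple-edges hypothesis in the injectivity of the edge map $u\to v\mapsto u^+\to v^-$, but the cleaner way to say it is that the hypothesis makes the passage from edge-paths in $G'$ back to vertex-paths in $G$ a bijection rather than many-to-one; this does not affect the validity of your argument.
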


\begin{proof}
Denote by \ts $\wh G=(\wh V, \wh E)$ \ts the graph $G=(V,E)$ with added new vertices
$C_1,\ldots,C_k$ and directed edges $(C_iA_i)$.
For each edge $(XY)\in \wh E$, define its weight by $w(XY):=w(Y)$.
Apply Theorem~\ref{t:LGV-e} for the sets \ts $\cC=\{C_1,\ldots,C_k\}$ \ts
and \ts $\cb=\{B_1,\ldots,B_k\}$. Observe that the weight
of each path \ts $\bigl(C_iA_iX_1X_2\ldots X_nB_j\bigr)$ in~$\wh G$
is the same as the weight of the path \ts $\bigl(A_iX_1\ldots X_nB_j\bigr)$ \ts
in graph~$G$.  Similarly, the collections of
vertex-disjoint paths from $\cC$ to $\cb$ in~$\wh G$
are in a natural correspondence with
collections of vertex-disjoint paths from $\ca$ to $\cb$ in~$G$.
This implies the result.
\end{proof}

In many applications of the LGV--lemma, there is a unique permutation $\pi$ for
which there exists a vertex-disjoint collection of paths, and this unique $\pi$ is the identical
permutation, and the determinant equals to the weighted sum over
collections of disjoint paths from $A_i$ to $B_i$.
This also holds in the settings of Theorems~\ref{t:MPP3-identity},~\ref{t:hor-cut},~\ref{t:vert-cut} and
\ref{t:main-gen}.

\subsection{Proof of Theorem~\ref{t:vert-cut}}
By the vertex version of the LGV--lemma in Corollary~\ref{c:LGV-v}, the multivariate
rational function \ts
$F_{\ca,\cb}(\bx,\by)$ \ts
is a determinant of a $k\times k$ matrix~$U$ in which every entry $u_{ij}$ is
a rational function.  By Lemma~\ref{l:path}, these functions~$u_{ij}$ are
$\by$-symmetric. Thus the determinant is also $\by$-symmetric,
which completes the proof. \ $\sq$

\subsection{Proof of Theorem~\ref{t:hor-cut}}
In notation of Subsection~\ref{ss:alg-pre},
let \ts $\cR=\cc(\bx,\by)$. 
 Let $\mathcal{I}$ be the ideal generated
by the polynomials $P_{b_1}(t_1)$, $P_{b_2}(t_2),\ldots,P_{b_k}(t_k)$.
Consider the ring $\rR=\cR[t_1,\ldots,t_k]/\mathcal{I}$;
each element of this ring corresponds to a unique
polynomial $H(t_1,\ldots,t_k)$ with degrees less than $b_j$ in the variable~$t_j$,
for all $j=1,\ldots,k$.  For the elements of $\rR$, this allows us to define
the coefficients of the monomials \ts $t_1^{s_1}\ldots t_k^{s_k}$, where
$0\leqslant s_i<b_i$.

By the vertex version of the LGV--lemma in Corollary~\ref{c:LGV-v}, we have:
$$
F_{\ca,\cb} \. = \. \det\bigl(F_{A_i,B_j}\bigr)_{i,j=1}^k\, .
$$
Denote
$$\varphi_i(t_j) \. := \. (x_{a+i+1}-t_j)(x_{a+i+2}-t_j)\cdots (x_{a+k}-t_j)\ts,
$$
and observe the Vandermonde-type determinant
$$
(\circledast) \qquad
\det(\varphi_i(t_j))_{i, j=1}^k \, = \, \prod_{1\leqslant i<j\leqslant k} \. (t_j-t_i)\ts.
$$
The proof of~$(\circledast)$ follows the same argument as the standard proof of
the (usual) Vandermonde determinant formula.

The elements \ts $(t_i-x_j)$ \ts are invertible in $\rR$,
and by Lemma~\ref{l:path} we have:
$$
F_{A_i,B_j} \, = \, \Bigl[t_j^{b_j-1}\Bigr] \. \prod_{s=1}^{a+i} \. \frac1{x_s-t_j}\,= \,
\Bigl[t_j^{b_j-1}\Bigr] \, \varphi_i(t_j) \prod_{s=1}^{m} \frac{1}{x_s-t_j}\,.
$$
Interchanging the coefficients-evaluating functional and the determinant sign and
applying~$(\circledast)$, we obtain:
$$\aligned
F_{\ca,\cb} \, & = \, \left[t_1^{b_1-1}\ldots t_k^{b_k-1}\right]
\, \prod_{j=1}^k\prod_{s=1}^m \. \frac1{x_s-t_j} \. \det\bigl(\varphi_i(t_j)\bigr)_{i, j=1}^k\\
& = \, \left[t_1^{b_1-1}\ldots t_k^{b_k-1}\right]  \prod_{1\leqslant i<j\leqslant k} \. (t_j-t_i) \, \prod_{j=1}^k\prod_{s=1}^m\frac{1}{x_s-t_j}\,.
\endaligned
$$
The RHS is certainly symmetric in \ts $\bx=(x_1,\ldots,x_m)$. This completes the proof
of the theorem. \ $\sq$


\bigskip

\section{Final remarks}\label{s:finrem}

\subsection{Many hidden symmetries}
\label{ss:finrem-other}
As we mentioned in the introduction, hidden symmetries are
a staple in Algebraic and Enumerative Combinatorics. Without
aiming to review even a fraction of the literature,
let us mention a few notable examples.
First, the \emph{Littlewood--Richardson coefficients}
have a number of hidden symmetries not reflected in their
classical combinatorial interpretation.  While the
\emph{BZ-triangles}~\cite{BZ} combined with bijections
in~\cite{PV1} explained some of the symmetries, others
remain unexplained, see~\cite[$\S$6.6]{PV2}.

Another major appearance of the hidden symmetries is
in connection with the \emph{alternating sign matrices},
which led to a conceptual proof by Kuperberg~\cite{Kup}.
Further symmetries of ASMs were discovered by
Razumov--Stroganov~\cite{RS} (see also~\cite{Wie}),
and eventually proved by a technical argument in~\cite{CS}.

Finally, in a fascinating study (completely unrelated
to this work), Coxeter used the symmetry of regular solids in
$\rr^4$ to evaluate special values of the \emph{dilogarithm}~\cite{Cox}.
The following amazing identity coming from the \emph{600-cell}
is a testament to the power of hidden symmetries:
$$
\sum_{n=1}^\infty \,\. \frac{\phi^n}{n^2} \, \cos\left(\frac{2\pi n}{5}\right) \, = \, \frac{\,\pi^2}{100}
\,, \quad
\text{where} \quad \phi \. = \. \frac{\sqrt{5}-1}{2}\..
$$

\subsection{Yang--Baxter equations}  \label{ss:finrem-BP}
Closer to the subject, Borodin in~\cite{Bor}
initiated the study of symmetric rational functions for
the \emph{six-vertex model} which are proved via the
\emph{Yang--Baxter equations}, see~\cite{Bax}.
These results were greatly extended in~\cite{BP1}
(see also a survey~\cite{BP2}).  These functions have
multiple families of parameters, but they do not
specialize to our functions \ts $F_{\ca,\cb}(\bx,\by)$.
To see this, note that in our setting, the intersections
are not allowed, making it a \emph{five-vertex model},
implying degeneration of many parameters.

In a parallel investigation, Bump, McNamara and Nakasuji~\cite{BMN}
realized that the factorial Schur functions can be expressed
as the partition function of a six-vertex model with certain
particular multivariate parameters.  When \ts $t=-1$, the
deformation in~$\S$4 in their paper gives new solutions of
the Yang--Baxter equations exactly with the same parameters as
are implicit in this paper.  In particular, this gives a new
proof of the 2-symmetry in Lemma~\ref{l:2sym}, the fourth proof
counting two proofs in this paper and one in~\cite{MPP3}, but
perhaps the most conceptual one.  We learned about~\cite{BMN}
only after this paper was written.

We should emphasize that a solution of the Yang--Baxter equations
is not enough to establish the symmetry, as one needs to check the
boundary conditions.  This is what makes our Main Theorem~\ref{t:main-gen}
so surprising -- it gives the most unusual boundary conditions for
which the symmetry holds.

\subsection{Further symmetries}  \label{ss:finrem-shift}
Let us mention some recent progress in this setting,
the \emph{shift invariance} for the six-vertex model
and \emph{polymers}, discovered recently in~\cite{BGW}.
It can be viewed as the new fundamental (multivariate)
hidden symmetry for the number of certain lattice
path configurations.  This shift invariance found a surprising
application in~\cite{BGR1} to certain properties of multi-particle
generalization of TASEP, in turn related to the number of
reduced factorizations of certain permutations in~$S_n$.
Most recently, \cite{Gal} established a more general type of symmetries
called \emph{flip invariance}, and gave them a conceptual
algebraic explanation.  In a different direction,
curious combinatorial implications of this
and related symmetries were found in~\cite{Dau}.

\subsection{Factorial Schur functions} In notation of~$\S$\ref{ss:main-lozenge},
when $a=0$ as in Figure~\ref{f:Sawtooth},
one can think of $F_{\ca,\cb}(\bx, \ts | \ts \by)$ in
Theorem~\ref{t:hor-cut} the multivariate deformation
of \ts $N(\la)$.  This deformation is different, but curiously
similar to the $\bx$-symmetric and $\by$-parametrized
factorial Schur functions \ts $s_\la(\bx\ts | \ts -\by)$,
which forms a basis in symmetric polynomials of~$\bx$,
see~\cite[$\S$6]{Mac}.  This should not come as a surprise
as  the proof in~\cite{MPP3} is based on combinatorics and
algebra of factorial Schur functions.  It would be
interesting to establish a formal connection in
full generality.

\subsection{Selberg integral}\label{ss:finrem-KO}
In~\cite{KO}, the authors proved some of the corollaries of~\cite{MPP3}.
The results follow from the \emph{Selberg integral}, another yet to be fully
understood hidden symmetry, see \cite[$\S$9.3-9.4]{MPP3}.

\subsection{Identities}\label{ss:finrem-MPP3}
Theorem~\ref{t:MPP3-identity} is stated in \cite[Thm 3.10]{MPP3}
in a weaker form, but the result follows from the proof.  Of course,
we both reprove and generalize it in this paper.  Note, however,
that Thm~3.12 in the same paper gives a different hidden symmetry
which does not follow from this paper.

\subsection{Evaluations}\label{ss:finrem-2sym}
As suggested by both our combinatorial and
algebraic proofs, Theorem~\ref{t:MPP3-identity} is not obvious
already for~$k=1$. Even the special case $k=1$, $x_i=i$ and
$y_j=b-j+1$, is already quite interesting~\cite[Cor.~3.11]{MPP3}.

\subsection{Generalizations}\label{ss:finrem-LA}
The combinatorial proof in Section~\ref{s:comb} may appear to be
more flexible, as it leads to the proof of our Main
Theorem~\ref{t:main-gen}.  However, the algebraic proofs
tend to be more powerful and amenable to generalizations of
different kind.  For example, it would be interesting if
the results generalize to three and higher dimensions as we
seem to have exhausted the planar version.
In a different direction, the determinant style proofs
as in Section~\ref{s:alg}, suggest
possibility of non-commutative generalization, cf.~\cite{GR}.
Finding a proper $q$-analogue (or quantum analogue?) would
be especially interesting.

\subsection{Up-Right condition}\label{ss:finrem-UR}
Theorem~\ref{t:vert-cut} remains true even when the assumption
that all paths are required to be \ts {\tt Up-Right} \ts is removed.
This leads to a somewhat stronger but less natural result.
We leave the proof to the reader.  Let us note, however,
that while the \ts {\tt Up-Right} \ts condition is vacuous
for Theorem~\ref{t:hor-cut}, it is necessary for our Main
Theorem~\ref{t:main-gen}.

\bigskip

{\small
\subsection*{Acknowledgements}
We are grateful to Alejandro Morales and Greta Panova for many
interesting conversations about the Naruse hook-length formula over
the years.  Special thanks to Vadim Gorin and Leo Petrov for telling
us about connections to the six-vertex model and help with the
references, to Alexey Borodin for telling us about~\cite{BMN},
and to Darij Grinberg for numerous helpful remarks.

These results were obtained during the \emph{Asymptotic
Algebraic Combinatorics} workshop at the IPAM; both authors thank
IPAM for the hospitality, organization and inspiration.
The first author was partially supported by the NSF. The second
author was partially supported by the Foundation for the
Advancement of Theoretical Physics and Mathematics ``BASIS''.

\vskip.7cm

\newpage

\end{document}